\documentclass[preprint,12pt]{elsarticle}
\usepackage{psfrag}
\usepackage{a4wide}
\usepackage{rotating}
\usepackage{color}
\usepackage{amssymb}
\usepackage{amsmath}
\usepackage{amsthm}
\usepackage{verbatim}
\usepackage{subfigure}
\usepackage{tabularx}
\usepackage{tikz}
\usepackage{pgfplots}
\usepgfplotslibrary{colorbrewer}
\usepackage{soul}

\def\bfm#1{\boldsymbol{#1}}
\newcommand{\bb}[1]{\bfm{#1}}
\newcommand{\f}[1]{\mathbf{#1}}
\newcommand{\R}{\mathbb R}
\newcommand{\N}{\mathbb N}
\newcommand{\UT}{T}
\newcommand{\US}[2]{\mathbb{S}_{#1}^{#2}}
\newcommand{\UXI}[2]{\Xi_{#1}^{#2}}
\newcommand{\UN}[3]{N_{#3,#1}^{#2}}
\newcommand{\UV}{\mathbb{V}}
\newcommand{\UW}{\mathbb{W}}
\newcommand{\PhiB}{\Phi}
\newcommand{\phiS}[3]{\phi_{#2,#3}^{\Omega^{(#1)}}}
\newcommand{\phiT}[2]{\phi_{#2}^{\Gamma_{#1}}}

\newcommand{\coefa}[2]{\widehat{b}^{#1}_{#2}}
\newcommand{\coefb}[2]{\widetilde{b}\mbox{}^{#1}_{#2}}
\newcommand{\coefc}[2]{\overline{b}\mbox{}^{#1}_{#2}}
\newcommand{\tauN}[1]{\tau_{#1}}
\newcommand{\I}{\mathbf{I}}
\newcommand{\J}[2]{\mathbf{J}_{{#1},{#2}}}
\newcommand{\zetaN}[1]{\zeta_{#1}}

\newcommand{\supp}{\text{supp\,}}
\newcommand{\myspan}{\text{span\,}}
\renewcommand{\vec}[1]{\bb{#1}}

\newcommand{\HUN}[4]{N_{#3,#1}^{#2,#4}}

\newcommand{\HhatPhiT}[3]{\widehat{\vec{\phi}}^{(#1),#3}_{\Gamma_{#2}}}
\newcommand{\HVTN}[2]{{\bf N}_{#1}^{#2}}

\newcommand{\hatPhiS}[2]{\widehat{\vec{\phi}}_{\Omega^{(#2)}}^{(#1)}}
\newcommand{\hatPhiT}[2]{\mathbf{\widehat{\vec{\phi}}}^{(#1)}_{\Gamma_{#2}}}
\newcommand{\VTN}[1]{{\bf N}_{#1}}

\newcommand{\convergenceslope}[6] 
{
\draw [color=black,mark=none,#6]
(axis cs: #1, #3) -- (axis cs: #2, 
{exp((#4*(-1)) * ln((#2)/(#1)) + ln(#3))}) 
-- node [midway, #5, below]{1} (axis cs: #1, 
{exp((#4*(-1)) * ln((#2)/(#1)) + ln(#3))}) 
-- node [midway, #5, left]{${#4}$} (axis cs: #1, #3);
}


\newtheorem{thm}{Theorem}

\newtheorem{prop}{Proposition}

\theoremstyle{definition}
\newtheorem{ex}{Example}
\newtheorem{defn}{Definition}
\newtheorem{rem}{Remark}

\newproof{pf}{proof}

\bibliographystyle{plain}

\advance\textheight by 0.4cm
\advance\topmargin by -0.2cm

\begin{document}

\begin{frontmatter}

\title{Isogeometric analysis with $C^1$ hierarchical functions on planar two-patch geometries}


\author[flo]{Cesare Bracco}
\ead{cesare.bracco@unifi.it}

\author[flo]{Carlotta Giannelli}
\ead{carlotta.giannelli@unifi.it}

\author[lnz]{Mario Kapl\corref{cor}}
\ead{mario.kapl@ricam.oeaw.ac.at}

\author[lau,pav]{Rafael V\'{a}zquez}
\ead{rafael.vazquez@epfl.ch}
 
\address[flo]{Dipartimento di Matematica e Informatica ``U. Dini'', \\Universit\`{a} degli Studi di Firenze, Florence, Italy}
\address[lnz]{Johann Radon Institute for Computational and Applied Mathematics, \\Austrian Academy of Sciences, Linz, Austria}
\address[lau]{Institute of Mathematics, \'{E}cole Polytechnique F\'{e}d\'{e}rale de Lausanne, Lausanne, Switzerland}
\address[pav]{Istituto di Matematica Applicata e Tecnologie Informatiche `E.~Magenes' del CNR, Pavia, Italy}

\cortext[cor]{Corresponding author}
   
\begin{abstract}
Adaptive isogeometric methods for the solution of partial differential equations rely on the construction of locally refinable spline spaces. A simple and efficient 
way to obtain these spaces is to apply the multi-level construction of hierarchical splines, that can be used on single-patch domains or in multi-patch domains with $C^0$ continuity 
across the patch interfaces. Due to the benefits of higher continuity in isogeometric methods, recent works investigated the construction of spline spaces with global $C^1$ continuity 
on two or more patches. In this paper, we show how these approaches can be combined with the hierarchical construction to obtain global $C^1$ continuous hierarchical splines on 
two-patch domains. A selection of numerical examples is presented to highlight the features and effectivity of the construction.
\end{abstract}

\begin{keyword}
Isogeometric analysis \sep Geometric continuity \sep Two-patch domain \sep Hierarchical splines \sep Local refinement
\MSC 65D07\sep 65D17\sep 65N30
\end{keyword}

\end{frontmatter}

\section{Introduction}  \label{sec:into}

Isogeometric Analysis (IgA) is a framework for numerically solving partial differential equations (PDEs), see \cite{ANU:9260759,CottrellBook,HuCoBa04}, by using the same (spline) 
function space for describing the geometry (i.e. the computational domain) and for representing the solution of the considered PDE. One of the strong points of IgA compared to finite 
elements is the possibility to easily construct $C^1$ spline spaces, 
and to use them for solving 
fourth order PDEs by applying a Galerkin discretization to their variational formulation. Examples of fourth order problems with practical relevance (in the frame of IgA) are e.g. 
the biharmonic equation~\cite{CoSaTa16,KaBuBeJu16,TaDe14}, the Kirchhoff-Love 
shells~\cite{ABBLRS-stream,benson2011large,kiendl-bazilevs-hsu-wuechner-bletzinger-10,kiendl-bletzinger-linhard-09} 
and the Cahn-Hilliard equation~\cite{gomez2008isogeometric,GoCaHu09,LiDeEvBoHu13}.

Adaptive isogeometric methods can be developed by combining the IgA framework with spline spaces that have local refinement capabilities. 
Hierarchical B-splines \cite{kraft1997,vuong2011} and truncated hierarchical B-splines \cite{giannelli2012,giannelli2014} are probably the adaptive spline technologies that have 
been studied more in detail in the adaptive IgA framework \cite{buffa2016c,buffa2017b,gantner2017}. Their multi-level structure makes them easy to implement, with the evaluation of 
basis functions obtained via a recursive use of two-level relation due to nestedness of levels \cite{dangella2018,garau2018,hennig2016}. Hierarchical B-splines have been successfully 
applied for the adaptive discretization of fourth order PDEs, and in particular for phase-field models used in the simulation of brittle fracture \cite{hennig2018,hennig2016} or tumor 
growth \cite{lorenzo2017}.

While the construction of $C^1$ spaces is trivial in a single-patch domain, either using B-splines or hierarchical B-splines, the same is not true for general multi-patch domains. 
The construction of $C^1$ spline spaces over multi-patch domains is based on the concept of geometric continuity~\cite{HoLa93,Pe02}, which 
is a well-known framework in computer-aided design (CAD) for the design of smooth multi-patch surfaces. The core idea is to employ the fact that an isogeometric function is 
$C^1$-smooth if and only if the associated multi-patch graph surface is $G^1$-smooth~\cite{Pe15}, i.e., it is geometrically continuous of order~$1$. 

In the last few years there has been an increasing effort to provide methods for the construction of $C^1$ isogeometric spline spaces over general multi-patch domains.
The existing methods for planar domains
can be roughly classified into two groups depending on the used parameterization for the multi-patch domain. 
The first approach relies on a multi-patch parameterization which is $C^1$-smooth everywhere except in the neighborhood of extraordinary vertices (i.e. vertices with 
valencies different to four), where the parameterization is singular, see e.g.~\cite{NgPe16,ToSpHu17b,ToSpHu17}, or consists of a special construction, 
see e.g.~\cite{Pe15-2,KaPe18,NgKaPe15}. The methods \cite{NgPe16,ToSpHu17b,ToSpHu17} use a singular parameterization with patches in the vicinity of an 
extraordinary vertex, which belong to a specific class of degenerate (B\'{e}zier) patches introduced in \cite{Re97}, and that allow, despite having singularities, the design of 
globally $C^1$ isogeometric spaces. The techniques~\cite{Pe15-2,KaPe18,NgKaPe15} are based on $G^1$ multi-patch surface constructions, where the obtained surface in the neighborhood 
of an extraordinary vertex consists of patches of slightly higher degree~\cite{Pe15-2,NgKaPe15} and is generated by means of a particular subdivision scheme~\cite{KaPe18}. 
As a special case of the first approach can be seen the constructions in \cite{Peters2,ToSpHiHu16}, that employ a polar framework to generate $C^1$ spline spaces.

The second approach, on which we will focus, uses a particular class of regular $C^0$ multi-patch parameterizations, called analysis-suitable $G^1$ multi-patch 
parameterization~\cite{CoSaTa16}. 
The class of analysis-suitable $G^1$ multi-patch geometries characterizes the regular $C^0$ multi-patch parameterizations that allow the design of $C^1$ isogeometric spline 
spaces with optimal approximation properties, see~\cite{CoSaTa16,KaSaTa17b}, and includes for instance the subclass of bilinear multi-patch 
parameterizations~\cite{BeMa14,KaBuBeJu16,KaViJu15}. An algorithm for the construction of analysis-suitable $G^1$ parameterizations for complex multi-patch domains was presented 
in~\cite{KaSaTa17b}. The main idea of this approach is to analyze the entire space of $C^1$ isogeometric functions over the given multi-patch geometry to generate a basis of 
this space or of a suitable subspace. 
While the methods in \cite{BeMa14,KaBuBeJu16,KaViJu15} are mainly restricted to (mapped) bilinear multi-patch parameterizations, the 
techniques~\cite{BlMoVi17,KaSaTa17a,KaSaTa19b,KaSaTa17c,mourrain2015geometrically} can also deal with more general multi-patch geometries. An alternative but related approach 
comprises the constructions~\cite{ChAnRa18,ChAnRa19} for general $C^0$ multi-patch parameterizations, which increase the degree 
of the constructed spline functions in the neighborhood of the common interfaces to obtain $C^1$ isogeometric spaces with good approximation properties. 

In this work, we extend for the case of two-patch domains the second approach from above to the construction of \emph{hierarchical} $C^1$ isogeometric spaces on 
analysis-suitable $G^1$ geometries, using the abstract framework for the definition of hierarchical splines detailed in \cite{giannelli2014}. 
We show that the basis functions of the considered $C^1$ space on analysis-suitable $G^1$ two-patch parameterizations, which is a subspace of the space~\cite{KaSaTa17a} 
inspired by~\cite{KaSaTa17c}, satisfy the required properties given in \cite{giannelli2014}, and in particular that the basis functions are locally linearly independent 
(see Section~\ref{sec:abstract} for details). Note that in case of a multi-patch domain, the general framework for the construction of hierarchical splines~\cite{giannelli2014} cannot 
be used anymore, since the appropriate $C^1$ basis functions~\cite{KaSaTa17c} can be locally linearly dependent. Therefore, the development of another 
approach as~\cite{giannelli2014} would be needed for the multi-patch case, which is beyond the scope of this paper.

For the construction of the hierarchical $C^1$ spline spaces on analysis-suitable $G^1$ two-patch geometries, we also explore the explicit expression for the relation between 
$C^1$ basis functions of two consecutive levels, expressing coarse basis functions as linear combinations of fine basis functions. This relation is exploited for the implementation 
of hierarchical splines as in \cite{garau2018,hennig2016}. A series of numerical tests are presented, that are run with the help of the Matlab/Octave 
code GeoPDEs \cite{garau2018,vazquez2016}. 

The remainder of the paper is organized as follows. Section~\ref{sec:C1space} recalls the concept of analysis-suitable $G^1$ two-patch geometries and 
presents the used $C^1$ isogeometric spline space over this class of parameterizations. In Section~\ref{sec:C1hierarchical}, we develop the (theoretical) 
framework to employ this space to construct $C^1$ hierarchical isogeometric spline spaces, which 
includes the verification of the nested nature of this kind of spaces, as well as the proof of the local linear independence of the one-level basis functions. 
Additional details of the $C^1$ hierarchical construction, such as the refinement masks of the basis functions for the different levels, are discussed in 
Section~\ref{sec:implementation} with focus on implementation aspects. The generated hierarchical spaces are then used in Section~\ref{sec:examples} to numerically solve the 
laplacian and bilaplacian equations on two-patch geometries, where the numerical results demonstrate the potential of our $C^1$ hierarchical construction for applications in IgA. 
Finally, the concluding remarks can be found in Section~\ref{sec:conclusions}. The construction of the non-trivial analysis-suitable $G^1$ two-patch parameterization used in some of 
the numerical examples is described in detail in \ref{app:geometry}. For easiness of reading, we include at the end of the paper a list of symbols with the main notation 
used in this work.

\section{$C^1$ isogeometric spaces on two-patch geometries} \label{sec:C1space}

In this section, we introduce the specific class of two-patch geometries and the $C^1$ isogeometric spaces which will be used throughout the paper.

\subsection{Analysis-suitable $G^1$ two-patch geometries} \label{subsec:analysis_two-patch}

We present a particular class of planar two-patch geometries, called analysis-suitable $G^1$ two-patch geometries, which was introduced in~\cite{CoSaTa16}. This class is of 
importance since it comprises exactly those two-patch geometries which are suitable for the construction of $C^1$ isogeometric spaces with optimal approximation 
properties, see~\cite{CoSaTa16,KaSaTa17b}. The most prominent member is the subclass of bilinear two-patch parameterizations, but it was demonstrated in~\cite{KaSaTa17b} 
that the class is much wider and allows the design of generic planar two-patch domains.

Let $k,p,r \in \N$ with degree $p \geq 3$ and regularity $1 \leq r \leq p-2$. Let us also introduce the ordered set of internal breakpoints 
$\UT = \{ \tauN{1}, \tauN{2}, \ldots, \tauN{k}\}$, with $0 < \tau_{i} < \tau_{i+1} < 1$ for all $1 \leq i \leq k$. We denote by $\US{p}{r}$ the univariate spline space in $[0,1]$ with 
respect to the open knot vector 
\begin{equation}\label{eq:kv}
 \UXI{p}{r} = \{\underbrace{0,\ldots,0}_{(p+1)-\mbox{\scriptsize times}},
\underbrace{\tauN{1},\ldots ,\tauN{1}}_{(p-r) - \mbox{\scriptsize times}}, 
\underbrace{ \tauN{2},\ldots ,\tauN{2}}_{(p-r) - \mbox{\scriptsize times}},\ldots, 
\underbrace{ \tauN{k},\ldots ,\tauN{k}}_{(p-r) - \mbox{\scriptsize times}},
\underbrace{1,\ldots,1}_{(p+1)-\mbox{\scriptsize times}}\},
\end{equation}
and let $\UN{p}{r}{i}$, $i \in \I = \{ 0,\ldots,p+k(p-r) \}$, 
be the associated B-splines. Note that the parameter $r$ specifies 
the resulting $C^r$-continuity of the spline space~$\US{p}{r}$. 
We will also make use of the subspaces of higher regularity and lower degree, respectively $\US{p}{r+1}$ and $\US{p-1}{r}$, defined from the same internal breakpoints, 
and we will use an analogous 
notation for their basis functions. Furthermore, we denote by $n$, $n_0$ and $n_1$ the dimensions of the spline spaces $\US{p}{r}$, $\US{p}{r+1}$ and 
$\US{p-1}{r}$, respectively, which are given by
\[
 n= p+1+ k(p-r), \mbox{ }n_0= p+1+ k(p-r-1) \mbox{ and }n_1= p + k(p-r-1),
\]
and, analogously to $\I$, we introduce the index sets 
\[
 \I_0 = \{0, \ldots, n_{0}-1  \}, \qquad
 \I_1=\{0,\ldots, n_{1}-1 \},
\]
corresponding to basis functions in $\US{p}{r+1}$ and $\US{p-1}{r}$, respectively.

Let $\f{F}^{(L)}, \f{F}^{(R)} \in (\US{p}{r} \otimes \US{p}{r})^2$ be two regular spline parameterizations, 
whose images $\f{F}^{(L)}([0,1]^2)$ and $\f{F}^{(R)}([0,1]^2)$ define the two quadrilateral patches $\Omega^{(L)}$ and 
$\Omega^{(R)}$ via $\f{F}^{(S)}([0,1]^2) = \Omega^{(S)}$, $S \in \{L,R \}$. The regular, bijective mapping 
$\f{F}^{(S)} : [0,1]^2 \rightarrow \Omega^{(S)}$, $S \in \{L,R \}$, is called \emph{geometry mapping}, and possesses a spline representation
\[
\f{F}^{(S)}(\xi_1,\xi_2)= \sum_{i \in \I} \sum_{j \in \I}\f{c}^{(S)}_{i,j} \UN{p}{r}{i}(\xi_1) \UN{p}{r}{j}(\xi_2), \quad \f{c}^{(S)}_{i,j} \in \R^{2}.
\] 
We assume that the two patches $\Omega^{(L)}$ and $\Omega^{(R)}$ form a planar two-patch domain $\Omega=\Omega^{(L)} \cup \Omega^{(R)}$, which share one whole edge as 
common interface $\Gamma = \Omega^{(L)} \cap \Omega^{(R)}$. In addition, and without loss of generality, we assume that the common interface~$\Gamma$ is parameterized by 
$\f{F}_{0} : [0,1] \to \Gamma$ via
\[
\f{F}_{0}(\xi_2) =  \f{F}^{(L)}(0,\xi_2) = \f{F}^{(R)}(0,\xi_2), \quad \xi_2 \in [0,1], 
\]
and denote by $\f{F}$ the two-patch parameterization (also called \emph{two-patch geometry}) consisting of the two spline parameterizations $\f{F}^{(L)}$ 
and $\f{F}^{(R)}$.

\begin{rem}
For simplicity, we have restricted ourselves to a univariate spline space~$\US{p}{r}$ with the same knot multiplicity for all inner knots. Instead, a univariate 
spline space with different inner knot multiplicities can be used, as long as the multiplicity of each inner knot is at least $2$ and at most $p-1$.
Note that the subspaces~$\US{p}{r+1}$ and $\US{p-1}{r}$ should also be replaced by suitable spline spaces of regularity increased by one at each inner knot, and degree reduced by one, 
respectively. Furthermore, it is also possible to use different univariate spline spaces for both Cartesian directions and for both geometry mappings, with the requirement that both 
patches must have the same univariate spline space in $\xi_2$-direction.
\end{rem}

The two geometry mappings~$\f{F}^{(L)}$ and $\f{F}^{(R)}$ uniquely determine up to a common function~$\gamma : [0,1] \rightarrow \R$ (with $\gamma \neq 0$), the functions 
$\alpha^{(L)}$, $\alpha^{(R)}$, $\beta: [0,1] \rightarrow \R$ given by
\[
\alpha^{(S)}(\xi_2) = \gamma(\xi_2) \det \left( \partial_1 \f{F}^{(S)}(0,\xi_2), \partial_2 \f{F}^{(S)}(0,\xi_2)  \right), \quad S \in \{L,R \} ,
\]
and 
\[
\beta(\xi_2) = \gamma(\xi_2) \det \left( \partial_1 \f{F}^{(L)}(0,\xi_2), \partial_1 \f{F}^{(R)}(0,\xi_2) \right),
\]
satisfying for $\xi_2 \in [0,1 ]$
\begin{equation} \label{eq:alpha_cond}
 \alpha^ {(L)}(\xi_2) \alpha^{(R)}(\xi_2) < 0
\end{equation}
and
\begin{equation} \label{eq:geometry_cond}
 \alpha^{(R)} \partial_1 \f{F}^{(L)}(0,\xi_2) - \alpha^{(L)}(\xi_2) \partial_1 \f{F}^{(R)}(0,\xi_2) + \beta(\xi_2) \partial_2 \f{F}^{(L)}(0,\xi_2) = \f{0}.
\end{equation}
In addition, there exist non-unique functions~$\beta^{(L)}$ and $\beta^{(R)}:[0,1] \rightarrow \R$ such that
\begin{equation} \label{eq:beta_cond}
\beta(\xi_2) = \alpha^{(L)}(\xi_2) \beta^{(R)}(\xi_2) - \alpha^{(R)}(\xi_2) \beta^{(L)}(\xi_2), 
\end{equation}
see e.g. \cite{CoSaTa16,Pe02}. The two-patch geometry~$\f{F}$ is called \emph{analysis-suitable $G^1$} if there exist linear functions~$\alpha^{(S)},\beta^{(S)}$, 
$S \in \{L,R \}$ with $\alpha^{(L)}$ and $\alpha^{(R)}$ relatively prime\footnote{Two polynomials are relatively prime if their greatest common divisor has degree zero.} 
such that equations~\eqref{eq:alpha_cond}-\eqref{eq:beta_cond} are satisfied 
for $\xi_2 \in [0,1 ]$, see~\cite{CoSaTa16,KaSaTa17a}. Note that requiring that $\alpha^{(L)}$ and $\alpha^{(R)}$ are relatively prime is not restrictive: if $\alpha^{(L)}$ and 
$\alpha^{(R)}$ share a common factor, it is a factor of $\gamma$ too, thus $\alpha^{(L)}$ and $\alpha^{(R)}$ can be made relatively prime by dividing by such a factor. 

In the following, we will only consider planar two-patch domains~$\Omega$ which are described by analysis-suitable $G^1$ two-patch geometries~$\f{F}$. Furthermore, we 
select those linear functions~$\alpha^{(S)}$ and $\beta^{(S)}$, $S \in \{L,R \}$, that minimize the terms
\[
 ||\alpha^{(L)}+1 ||^{2}_{L^{2}([0,1])} +  ||\alpha^{(R)} -1 ||^{2}_{L^{2}([0,1])}
\]
and
\[
  ||\beta^{(L)} ||^{2}_{L^{2}([0,1])} +  ||\beta^{(R)} ||^{2}_{L^{2}([0,1])},
\]
see~\cite{KaSaTa17c}.

\subsection{The $C^1$ isogeometric space $\UV$ and the subspace $\UW$} \label{subsec:spaceW}

We recall the concept of $C^1$ isogeometric spaces over analysis-suitable $G^1$ two-patch geometries studied in~\cite{CoSaTa16,KaSaTa17a}, and especially 
focus on a specific subspace of the entire space of $C^1$ isogeometric functions.

The space~$\UV$ of $C^1$ isogeometric spline functions on $\Omega$ (with respect to the two-patch geometry~$\f{F}$ and spline 
space~$\US{p}{r}$) is given by
\begin{equation} \label{eq:spaceV}
\UV = \{ \phi \in C^1(\Omega) \ :\  \phi \circ \f{F}^{(S)} \in \US{p}{r}\otimes\US{p}{r} , S \in \{L,R \} \}. 
\end{equation}
A function $\phi: \Omega \rightarrow \R$ belongs to the space $\UV$ if and only if the functions $f^{(S)}=\phi \circ \f{F}^{(S)}$, $S \in \{L,R \}$, satisfy that 
\begin{equation} \label{eq:isogeometric}
f^{(S)} \in \US{p}{r} \otimes \US{p}{r}, \quad S \in \{L,R \},
\end{equation}
\begin{equation} \label{eq:trace}
 f^{(L)}(0,\xi_2) = f^{(R)}(0,\xi_2), \quad \xi_2 \in [0,1],
\end{equation}
and
\[
 \alpha^{(R)}(\xi_2) \partial_1 f^{(L)}(0,\xi_2) - \alpha^{(L)}(\xi_2) \partial_1 f^{(R)}(0,\xi_2) + \beta(\xi_2) \partial_2 f^{(L)}(0,\xi_2) = 0, \quad \xi_2 \in [0,1 ],
\]
where the last equation is due to \eqref{eq:beta_cond} further equivalent to 
\begin{equation} \label{eq:derivative}
\frac{\partial_1 f^{(L)}(0,\xi_2) - \beta^{(L)}(\xi_2) \partial_2 f^{(L)}(0,\xi_2)}{\alpha^{(L)}(\xi_2)}  = 
\frac{\partial_1 f^{(R)}(0,\xi_2) - \beta^{(R)}(\xi_2) \partial_2 f^{(R)}(0,\xi_2)}{\alpha^{(R)}(\xi_2)} , \quad \xi_2 \in [0,1],
\end{equation}
see e.g.~\cite{CoSaTa16,Pe15,KaViJu15}. Therefore, the space~$\UV$ can be also described as
\begin{equation}
 \UV = \{ \phi: \Omega \rightarrow \R \ :\ f^{(S)} = \phi \circ \f{F}^{(S)}, S \in \{L,R \}, 
\mbox{ fulfill the equations \eqref{eq:isogeometric}-\eqref{eq:derivative}} \}. 
\end{equation}

Note that the equally valued terms in~\eqref{eq:derivative} represent a specific directional derivative of $\phi$ across the interface~$\Gamma$. 
In fact, recalling that $f^{(S)}= \phi \circ \f{F}^{(S)}$ for $S \in \{L,R\}$, we have
\begin{equation} \label{eq:specific_derivative}
\nabla \phi \cdot (\f{d} \circ \f{F}_0(\xi_2)) = \nabla \phi \cdot (\f{d}^{(S)} \circ \f{F}_0(\xi_2)) =  \frac{\partial_1 f^{(S)}(0,\xi_2) - 
\beta^{(S)}(\xi_2) \partial_2 f^{(S)}(0,\xi_2)}{\alpha^{(S)}(\xi_2)}, \quad \xi_2 \in [0,1], 
\end{equation} 
where $\f{d}$ is a transversal vector to $\Gamma$ given by $\f{d}=\f{d}^{(L)} = \f{d}^{(R)}$ with 
$\f{d}^{(S)} \circ \f{F}_0(\xi_2) = (\partial_1 \f{F}^{(S)}(0,\xi_2),\partial_2 \f{F}^{(S)}(0,\xi_2)) (1,-\beta^{(S)}(\xi_2))^T \frac{1}{\alpha^{(S)}(\xi_2)}$, 
$S \in \{L,R \}$, see~\cite{CoSaTa16,KaSaTa17a}.

The structure and the dimension of the space~$\UV$ heavily depends on the functions~$\alpha^{(L)}$, $\alpha^{(R)}$ and $\beta$, and was fully 
analyzed in~\cite{KaSaTa17a} by computing a basis and its dimension for all possible configurations. 
Below, we restrict ourselves to a simpler subspace~$\UW$ (motivated by~\cite{KaSaTa17c}), which preserves the approximation properties of $\UV$,  
and whose dimension is independent of the functions~$\alpha^{(L)}$, $\alpha^{(R)}$ and $\beta$.

The $C^1$ isogeometric space~$\UW$ is defined as
\[
\UW = \text{span}\, \PhiB , \qquad \PhiB = \PhiB_{\Omega^{(L)}} \cup \PhiB_{\Omega^{(R)}} \cup \PhiB_{\Gamma_0} \cup \PhiB_{\Gamma_1},
\]
with
\begin{equation}\label{eq:Phi-Omega}
\PhiB_{\Omega^{(S)}} = \left\{ \phiS{S}{i}{j} \ : \ i \in \I \setminus \{0,1 \}; \, j \in \I \right\} , \qquad S \in \{L,R \},
\end{equation}
\begin{equation}\label{eq:Phi-Gamma}
 \PhiB_{\Gamma_0} =  \left\{ \phiT{0}{i} \ : \ i \in \I_0 \right\}, \qquad 
 \PhiB_{\Gamma_1} =  \left\{  \phiT{1}{i} \ : \ i \in \I_1  \right\},
\end{equation}
where the functions $\phiS{S'}{i}{j}$, $\phiT{0}{i}$ and $\phiT{1}{i}$ are defined via
\begin{equation}\label{eq:stdbasis}
 \Big(\phiS{S'}{i}{j} \circ \f{F}^{(S)}\Big)(\xi_1,\xi_2) = 
 \begin{cases} \UN{p}{r}{i}(\xi_1)  \UN{p}{r}{j}(\xi_2) & \mbox{if }S=S', \\
  0 & \mbox{otherwise},
\end{cases}
\quad i \in \I \setminus \{0,1 \}; \, j \in \I; \; S, S' \in \{ L , R \},
\end{equation}
\begin{eqnarray}
\Big(\phiT{0}{i} \circ \f{F}^{(S)}\Big)(\xi_1,\xi_2) & = & \UN{p}{r+1}{i}(\xi_2) 
 \Big(\UN{p}{r}{0}(\xi_1) + \UN{p}{r}{1}(\xi_1)  \Big) \notag \\
 & + & \beta^{(S)}(\xi_2) \Big(\UN{p}{r+1}{i} \Big)'(\xi_2) \frac{\tauN{1}}{p} \UN{p}{r}{1}(\xi_1), \quad i \in \I_0; \; S \in \{L,R \}, \label{eq:expr_phi0}
\end{eqnarray}
and
\begin{equation}\label{eq:expr_phi1}
\Big(\phiT{1}{i} \circ \f{F}^{(S)}\Big)(\xi_1,\xi_2)  = \alpha^{(S)}(\xi_2) 
 \UN{p-1}{r}{i}(\xi_2) \UN{p}{r}{1}(\xi_1) , \quad i \in \I_1; \; S \in \{L,R \}.
\end{equation}
The construction of the functions $\phiS{S'}{i}{j}$, $\phiT{0}{i}$ and $\phiT{1}{i}$ guarantees that they are linearly 
independent and therefore form a basis of the space~$\UW$. In addition, the functions fulfill equations 
\eqref{eq:isogeometric}-\eqref{eq:derivative} which implies that they are $C^1$-smooth on $\Omega$, and hence $\UW \subseteq \UV$.
Note that the basis functions $\phiS{S'}{i}{j}$ are standard tensor-product B-splines whose support is included in one of the two patches, while the functions
$\phiT{0}{i}$ and $\phiT{1}{i}$ are combinations of standard B-splines and their support crosses the 
interface $\Gamma$ (see Figure \ref{basisfig1} for an example).

Moreover, the traces and specific directional derivatives~\eqref{eq:specific_derivative} of the functions $\phiT{0}{i}$ and $\phiT{1}{i}$ at the interface~$\Gamma$ are equal to
\[
\phiT{0}{i} \circ \f{F}_{0}(\xi_2) =  \UN{p}{r+1}{i}(\xi_2), \quad  \phiT{1}{i} \circ \f{F}_{0}(\xi_2) =  0,
\]
and
\[
\nabla \phiT{0}{i} \cdot (\f{d} \circ \f{F}_0(\xi_2)) = 0, \quad 
\nabla \phiT{1}{i} \cdot (\f{d} \circ \f{F}_0(\xi_2)) = \UN{p-1}{r}{i}(\xi_2).
\]
Therefore, the $C^1$ isogeometric space~$\UW$ can be also characterized as
\begin{equation} \label{eq:charspaceW}
 \UW  =  \{ \phi \in \UV \ :\ \phi \circ \f{F}_{0}(\xi_2) \in \US{p}{r+1} \mbox{ and } \nabla \phi \cdot (\f{d} \circ \f{F}_{0}(\xi_2)) \in \US{p-1}{r} \}.
\end{equation}

\begin{figure}[h!]\begin{center}
\subfigure[]{\includegraphics[trim=35mm 0mm 30mm 0mm,clip,scale=0.35]{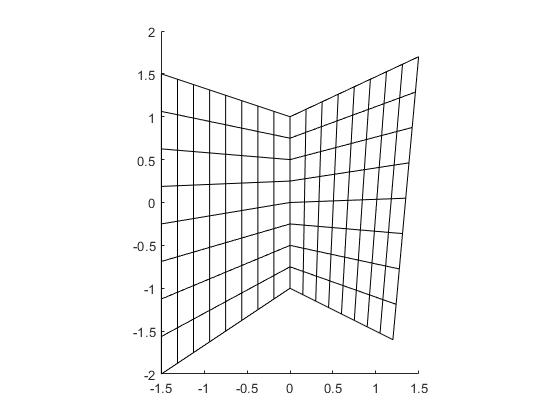}}\hspace{5mm}\\
\subfigure[]{\includegraphics[trim=0mm 0mm 0mm 0mm,clip,scale=0.09]{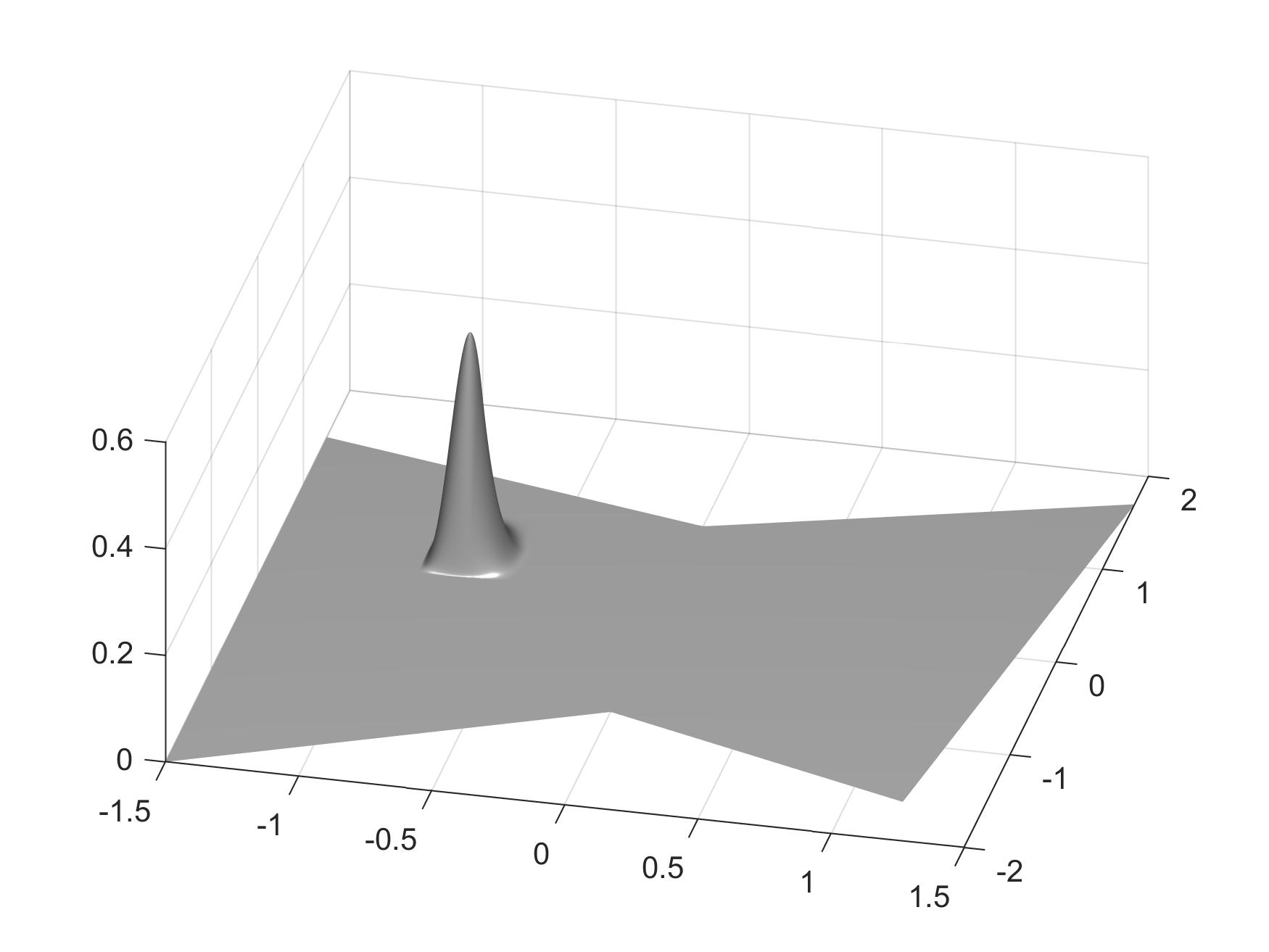}}\hspace{5mm}
\subfigure[]{\includegraphics[trim=0mm 0mm 0mm 0mm,clip,scale=0.09]{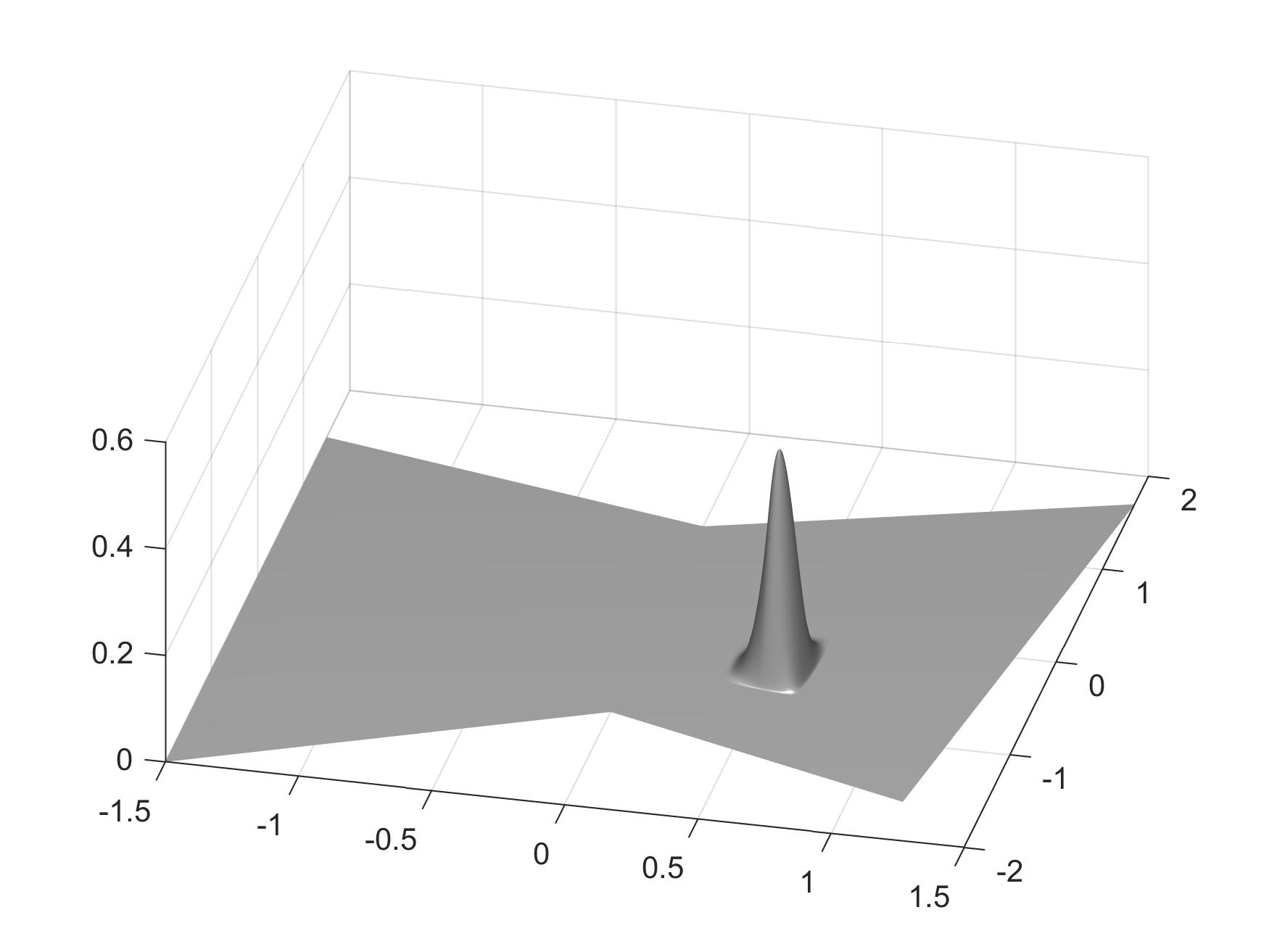}}\\
\subfigure[]{\includegraphics[trim=0mm 0mm 0mm 0mm,clip,scale=0.09]{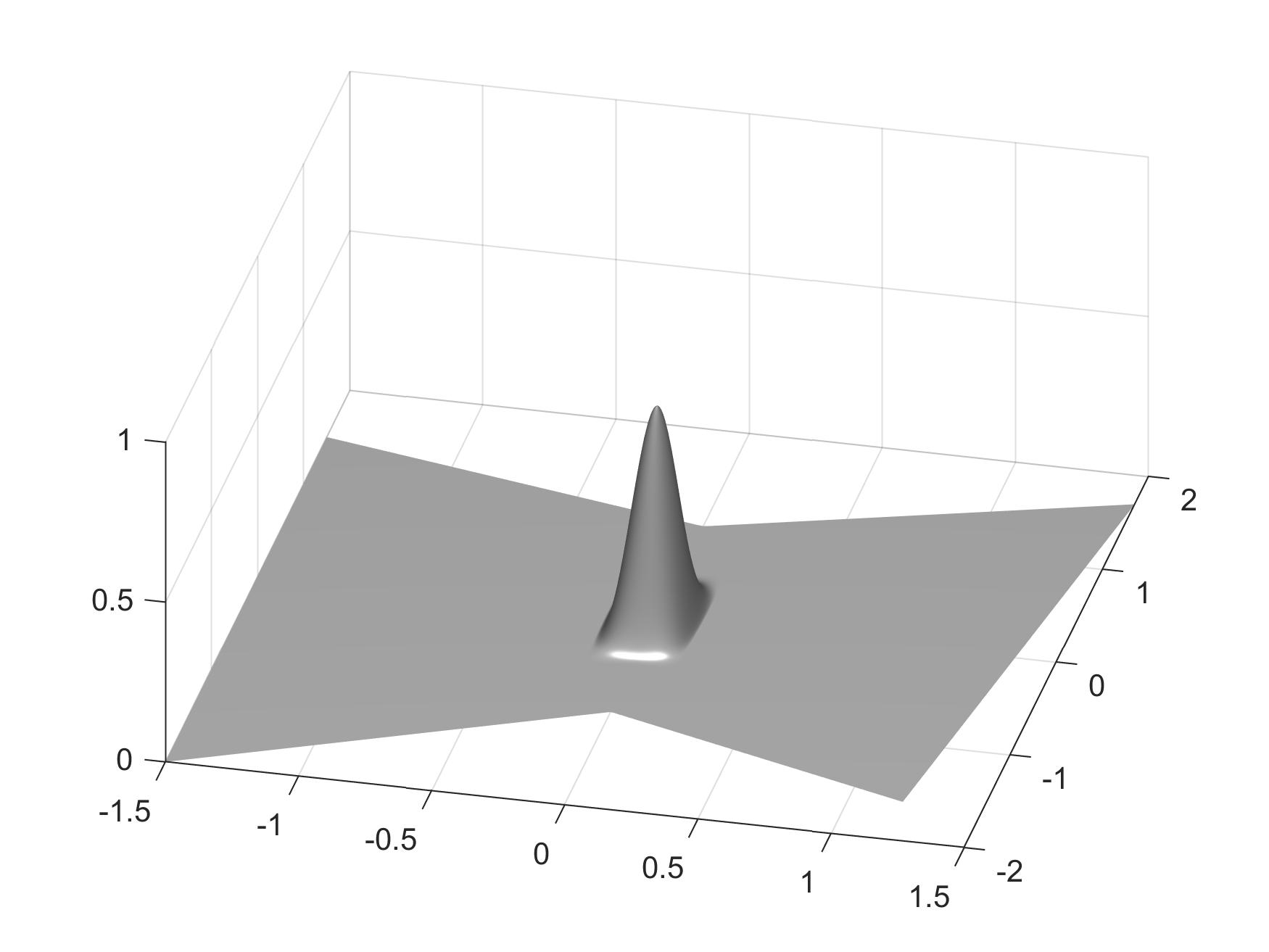}}\hspace{5mm}
\subfigure[]{\includegraphics[trim=0mm 0mm 0mm 0mm,clip,scale=0.09]{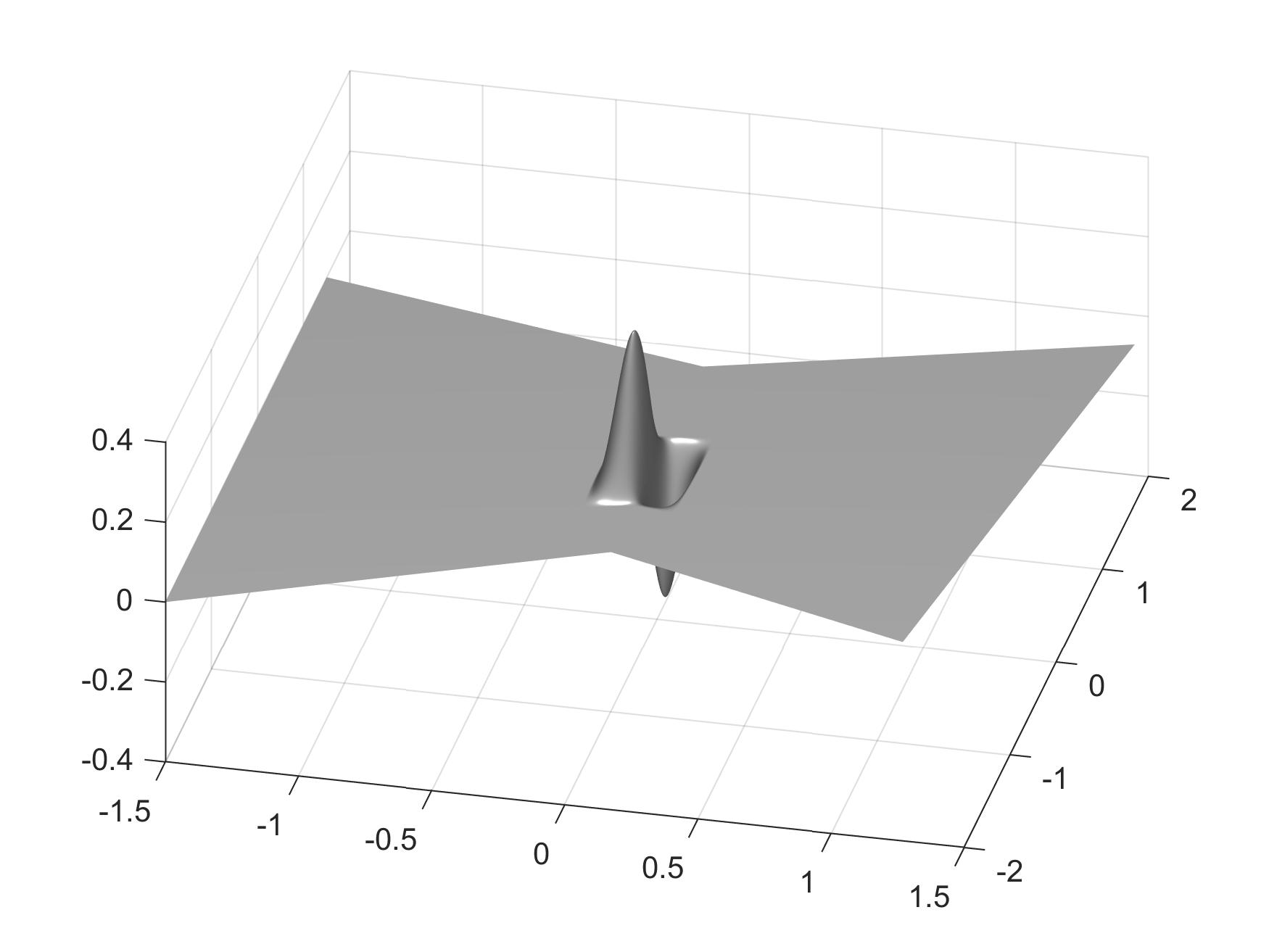}}
\caption{Example of basis functions of $\UW$ on the two-patch domain (a): figures (b)-(c) show two basis functions of type \eqref{eq:stdbasis} (standard B-splines whose support is 
included in one of the two patches), while figures (d) and (e) correspond to basis functions of type \eqref{eq:expr_phi0} and \eqref{eq:expr_phi1}, respectively (whose supports 
intersect the interface).}
 \label{basisfig1}
\end{center}
\end{figure}

\subsection{Representation of the basis with respect to $\US{p}{r} \otimes \US{p}{r}$} \label{subsec:representation}  

We describe the strategy shown in~\cite{KaSaTa17a} to represent the spline functions $\phiS{S'}{i}{j} \circ \f{F}^{(S)}$, $\phiT{0}{i} \circ \f{F}^{(S)}$ and 
$\phiT{1}{i} \circ \f{F}^{(S)}$, $S \in \{L,R \}$, with respect to the spline space $\US{p}{r} \otimes \US{p}{r}$, using a vectorial notation. 
Let us first introduce the vectors of functions $\VTN{0}$, $\VTN{1}$ and $\VTN{2}$, given by
\[
 \VTN{0}(\xi_1,\xi_2)=[\UN{p}{r}{0}(\xi_1)\UN{p}{r}{j}(\xi_2)]_{j \in \I}, \qquad
 \VTN{1}(\xi_1,\xi_2)=[\UN{p}{r}{1}(\xi_1)\UN{p}{r}{j}(\xi_2)]_{j \in \I},
\]
and
\[
\VTN{2}(\xi_1,\xi_2)=[\UN{p}{r}{2}(\xi_1)\UN{p}{r}{0}(\xi_2),\ldots,\UN{p}{r}{2}(\xi_1)\UN{p}{r}{n-1}(\xi_2),\ldots,\UN{p}{r}{n-1}(\xi_1)\UN{p}{r}{n-1}(\xi_2)]^{T},
\]
which represent the whole basis of $\US{p}{r} \otimes \US{p}{r}$. Let us also introduce, the vectors of functions
\[
\vec{\phi}_{\Gamma_0}(\f{x}) = [\phiT{0}{i}(\f{x})]_{i \in \I_0}, \qquad \vec{\phi}_{\Gamma_1}(\f{x}) = [\phiT{1}{i}(\f{x})]_{i \in \I_1},
\]
\[
\vec{\phi}_{\Omega^{(S)}}(\f{x}) = [\phiS{S}{i}{j}(\f{x})]_{i \in \I \setminus \{0,1 \}; \, j \in \I} \quad \text{ for } S \in \{L,R \},
\]
and finally, for $S \in \{L,R\}$, the vectors of functions $\hatPhiT{S}{0}$, $\hatPhiT{S}{1}$, $\hatPhiS{S}{S}$, given by
\[
 \hatPhiT{S}{0}(\xi_1,\xi_2)=[\phiT{0}{i} \circ \f{F}^{(S)}(\xi_1,\xi_2)]_{i \in \I_0}, \qquad
 \hatPhiT{S}{1}(\xi_1,\xi_2)=[\phiT{1}{i} \circ \f{F}^{(S)}(\xi_1,\xi_2)]_{i \in \I_1},
\]
\[
\hatPhiS{S}{S}(\xi_1,\xi_2) = [\phiS{S}{i}{j}\circ \f{F}^{(S)}(\xi_1,\xi_2)]_{i \in \I \setminus \{0,1 \}; \, j \in \I}.
\]
Since the basis functions~$\phiS{S}{i}{j}$ are just the ``standard'' isogeometric functions, the spline functions $\hatPhiS{S}{S}(\xi_1,\xi_2)$ automatically belong to the basis 
of the spline space $\US{p}{r} \otimes \US{p}{r}$, while an analysis of the basis functions in $\hatPhiT{S}{0}(\xi_1,\xi_2)$ and $\hatPhiT{S}{1}(\xi_1,\xi_2)$, leads to the 
following representation
\begin{equation} \label{eq:relation_basic}
\left[\begin{array}{c} \hatPhiT{S}{0}(\xi_1,\xi_2)\\ \hatPhiT{S}{1}(\xi_1,\xi_2)\\ \hatPhiS{S}{S}(\xi_1,\xi_2) \end{array}\right]= 
\left[\begin{array}{ccc} \widehat{B}& \widetilde{B}^{(S)}& 0\\ 0 & \overline{B}\mbox{}^{(S)} & 0 \\ 
0 & 0 & I_{n(n-2)} \end{array}\right]
\left[\begin{array}{c} \VTN{0}(\xi_1,\xi_2)\\ \VTN{1}(\xi_1,\xi_2)\\ \VTN{2}(\xi_1,\xi_2) \end{array}\right],\quad  S \in \{L,R \},
\end{equation}
where $I_m$ denotes the identity matrix of dimension~$m$, and the other blocks of the matrix take the form $\widehat{B}=[\coefa{}{i,j}]_{i \in \I_0,j \in \I}$, 
$\widetilde{B}^{(S)}=[\coefb{(S)}{i,j}]_{i \in \I_0,j \in \I}$, and $\overline{B}\mbox{}^{(S)}= [\coefc{(S)}{i,j}]_{i \in \I_1,j \in \I}$. In fact, these are sparse matrices, and by 
defining the index sets
\[
 \J{0}{i} = \{ j \in \I  \ : \ \text{supp} (\UN{p}{r}{j}) \cap \text{supp} (\UN{p}{r+1}{i}) \neq \emptyset \}, \quad \text{ for } i \in \I_0,
\]
and
\[
 \J{1}{i} = \{ j \in \I  \ : \ \text{supp} (\UN{p}{r}{j}) \cap \text{supp} (\UN{p-1}{r}{i}) \neq \emptyset \}, \quad \text{ for } i \in \I_1,
\]
it can be seen that the possible non-zero entries are limited to $\coefa{}{i,j}$, $\coefb{(S)}{i,j}$, 
$i \in \I_0$, $j \in \J{0}{i}$, and $\coefc{(S)}{i,j}$, $i \in \I_1$, $j \in \J{1}{i}$, respectively.

For the actual computation of these coefficients, let us denote by $\zetaN{m}$, with $m \in \I$, the Greville abscissae of the univariate spline space $\US{p}{r}$. Then, for 
each $S \in \{L ,R \}$ and for each $i \in \I_0$ or $i \in \I_1$, the linear factors 
$\coefa{}{i,j}$, $\coefb{(S)}{i,j}$, $j \in \J{0}{i}$, 
and $\coefc{(S)}{i,j}$, $j \in \J{1}{i}$,
can be obtained by solving the following systems of linear equations
\begin{equation*}
  \Big(\phiT{0}{i} \circ \f{F}^{(L)}\Big)(0,\zetaN{m}) = \sum_{j \in \J{0}{i}} \coefa{}{i,j} \UN{p}{r}{j}(\zetaN{m}) , \quad  m \in \J{0}{i},
 \end{equation*}
 \begin{equation*}
  \frac{\tauN{1} \partial_1 \Big(\phiT{0}{i} \circ \f{F}^{(S)}\Big)(0,\zetaN{m})}{p}  +\Big(\phiT{0}{i} \circ \f{F}^{(S)}\Big)(0,\zetaN{m}) = 
  \sum_{j \in \J{0}{i}} \coefb{(S)}{i,j} \UN{p}{r}{j}(\zetaN{m})  , \quad  m \in \J{0}{i},
 \end{equation*}
 and
 \begin{equation*}
  \frac{\tauN{1} \partial_1 \Big(\phiT{1}{i} \circ \f{F}^{(L)}\Big)(0,\zetaN{m})}{p} = 
  \sum_{j \in \J{1}{i}} \coefc{(S)}{i,j} \UN{p}{r}{j}(\zetaN{m})  , \quad  m \in \J{1}{i},
 \end{equation*}
 respectively, see~\cite{KaSaTa17a} 
 for more details. Note that the coefficients $\coefa{}{i,j}$, $i \in \I_{0}$, are exactly the spline coefficients of the 
 B-spline~$\UN{p}{r+1}{j}$ for the spline representation with respect to the space~$\US{p}{r}$, and can also be computed by simple knot insertion.

\section{$C^1$ hierarchical isogeometric spaces on two-patch geometries} \label{sec:C1hierarchical}

This section introduces an abstract framework for the construction of the hierarchical spline basis, that is defined 
in terms of a multilevel approach applied to an underlying sequence of spline bases that are \emph{locally linearly independent} and characterized 
by \emph{local and compact supports}. The $C^1$ hierarchical isogeometric spaces on two-patch geometries are then defined by applying the 
hierarchical construction to the $C^1$ isogeometric functions described in the previous section. Particular attention is devoted to the proof of 
local linear independence of the basis functions, cf. Section~\ref{subsec:C1hierarchy}, and to the refinement mask that explicitly identifies a two-scale relation between 
hierarchical functions of two consecutive levels, cf. Section~\ref{subsec:refinement}. Note that, even if the hierarchical framework can be applied with different refinement 
strategies between consecutive refinement levels, we here focus on dyadic refinement, the standard choice in most application contexts. In the following the refinement level $\ell$ 
is denoted as a superscript associated to the corresponding symbol.

\subsection{Hierarchical splines: abstract definition} \label{sec:abstract}

Let $\mathbb{U}^0\subset\mathbb{U}^1\subset\ldots\subset\mathbb{U}^{N-1}$ 
be a sequence of $N$ nested multivariate spline spaces defined on a closed domain $D\subset\mathbb{R}^d$, so that any space $\mathbb{U}^\ell$, for $\ell=0,\ldots,N-1$, is 
spanned by a (finite) basis $\Psi^\ell$ satisfying the following properties.
\begin{itemize}
\item[(P1)] \emph{Local linear independence};
\item[(P2)] \emph{Local and compact support}. 
\end{itemize}
The first property guarantees that for any subdomain $S$, the restrictions of the (non-vanishing) functions $\psi\in\Psi^\ell$ to $S$ are linearly independent. The locality of 
the support instead enables to localize the influence of the basis functions with respect to delimited areas of the domain. Note that the nested nature of the spline spaces 
implies the existence of a two-scale relation between adjacent bases: for any level $\ell$, each basis function in $\Psi^\ell$ can be expressed as linear combination of basis 
functions in $\Psi^{\ell+1}$.

By also considering a sequence of closed nested domains 
\begin{equation}\label{eq:domh}
\Omega^0 \supseteq \Omega^1 \supseteq \ldots \supseteq \Omega^{N-1},
\end{equation}
with $\Omega^0\subseteq D$, we can define a hierarchical  spline basis according to the following definition. 

\begin{defn} \label{dfn:hbasis} 
The hierarchical spline basis ${\cal H}$ with respect to the domain hierarchy \eqref{eq:domh}
is defined as
\[{\cal H} = \left\{\psi\in\Psi^\ell : \supp\!^0\psi \subseteq \Omega^\ell \wedge
\supp\!^0\psi\not\subseteq\Omega^{\ell+1}\right\},
\]
where $\supp\!^0\psi = \supp\psi \cap \Omega^0$. 
\end{defn} 

Note that the basis ${\cal H} = {\cal H}^{N-1}$ can be iteratively constructed as follows. 
\begin{enumerate}
\item ${\cal H}^0 = \left\{\psi\in \Psi^0: \supp\!^0\psi \ne \emptyset \right\}$;
\item for $\ell=0,\ldots,N-2$
\[
{\cal H}^{\ell+1} = {\cal H}_A^{\ell+1} \cup {\cal H}_B^{\ell+1},
\]
where 
\[
{\cal H}_A^{\ell+1} = \left\{\psi \in {\cal H}^\ell: \supp\!^0\psi \not\subseteq \Omega^{\ell+1}\right\}\quad\text{and}\quad
{\cal H}_B^{\ell+1} = \left\{\psi \in {\Psi}^{\ell+1}: \supp\!^0\psi \subseteq \Omega^{\ell+1}\right\} .
\]
\end{enumerate}

The main properties of the hierarchical basis can be summarized as follows.

\begin{prop}\label{prop:hbasis} By assuming that properties (P1)-(P2) hold for the bases $\Psi^\ell$, the hierarchical basis satisfies the following properties:
\begin{itemize}
\item[(i)] the functions in ${\cal H}$ are linearly independent, 
\item[(ii)] the intermediate spline spaces are nested,  
namely $\myspan {\cal H}^\ell \subseteq\myspan{\cal H}^{\ell+1}$,
\item[(iii)] 
given an enlargement of the subdomains $(\widehat{\Omega}^\ell)_{\ell=0,\ldots,\widehat{N}-1}$, with $N \le \widehat N$, such that $\Omega^0=\widehat{\Omega}^0$ and $\Omega^{\ell} 
\subseteq \widehat{\Omega}^\ell$, for $\ell=1,\ldots,N-1$, then $\mathrm{span}{\cal H} \subseteq \mathrm{span}\widehat {\cal H}$.
\end{itemize}
\end{prop}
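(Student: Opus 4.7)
The plan is to establish the three items separately, leaning on (P1) for (i), on nestedness plus a support-localization property of the two-scale relation for (ii), and on a telescoping reduction to (ii) for (iii).

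For (i), I would proceed level by level. Assume $\sum_{\psi \in \mathcal{H}} c_\psi \psi \equiv 0$ on $\Omega^0$ and group $\mathcal{H} = \bigcup_{\ell=0}^{N-1}(\mathcal{H}\cap \Psi^\ell)$. By construction every $\psi \in \mathcal{H}\cap \Psi^\ell$ with $\ell \geq 1$ satisfies $\supp\!^0\psi \subseteq \Omega^\ell \subseteq \Omega^1$, so the restriction of the identity to $\Omega^0 \setminus \Omega^1$ reduces to a combination of level-$0$ functions of $\mathcal{H}$ only. Each such level-$0$ function has, by definition of $\mathcal{H}$, $\supp\!^0\psi \not\subseteq \Omega^1$, hence is non-vanishing on $\Omega^0 \setminus \Omega^1$; property (P1) applied to this subdomain then forces all level-$0$ coefficients to be zero. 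Iterating on $\Omega^\ell \setminus \Omega^{\ell+1}$ for $\ell = 1, \ldots, N-2$ and finally on $\Omega^{N-1}$ annihilates every remaining coefficient.

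For (ii), I would write $\mathcal{H}^\ell = \mathcal{H}_A^{\ell+1} \cup \mathcal{R}$ where $\mathcal{R} = \{\psi \in \mathcal{H}^\ell : \supp\!^0\psi \subseteq \Omega^{\ell+1}\}$. Since $\mathcal{H}_A^{\ell+1} \subseteq \mathcal{H}^{\ell+1}$ by construction, it suffices to show $\mathcal{R} \subseteq \myspan \mathcal{H}^{\ell+1}$. Any $\psi \in \mathcal{R}$ lives in $\mathbb{U}^\ell \subseteq \mathbb{U}^{\ell+1}$, so the two-scale relation yields $\psi = \sum_\mu a_\mu \mu$ with $\mu \in \Psi^{\ell+1}$; provided the contributing $\mu$ satisfy $\supp \mu \subseteq \supp \psi$, we obtain $\supp\!^0\mu \subseteq \supp\!^0\psi \subseteq \Omega^{\ell+1}$, hence $\mu \in \mathcal{H}_B^{\ell+1} \subseteq \mathcal{H}^{\ell+1}$. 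For (iii), applying (ii) telescopically along the enlarged hierarchy reduces the claim to $\mathcal{H} \subseteq \myspan \widehat{\mathcal{H}}$. Pick $\psi \in \mathcal{H}\cap \Psi^\ell$; then $\supp\!^0\psi \subseteq \Omega^\ell \subseteq \widehat{\Omega}^\ell$. If $\supp\!^0\psi \not\subseteq \widehat{\Omega}^{\ell+1}$ the function already belongs to $\widehat{\mathcal{H}}$; otherwise the same two-scale expansion as in (ii) exhibits $\psi$ as a combination of finer functions whose supports sit inside $\widehat{\Omega}^{\ell+1}$, and a downward induction on the remaining levels concludes.

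The main obstacle is the support-containment property invoked in (ii) and (iii): when a coarse basis function is expanded in the next finer basis, only fine functions with support inside the coarse support may appear with nonzero coefficient. This is standard for B-spline-like refinable systems and is satisfied by the specific bases used later in Section~\ref{sec:C1hierarchical}, but it is not literally implied by (P1)--(P2). I would therefore either invoke it as an implicit structural feature of the sequence $\{\Psi^\ell\}$ in use, or explicitly augment (P2) with localized refinement. Once this is granted, none of the three steps requires further machinery and the verification proceeds exactly as sketched.
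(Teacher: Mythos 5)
Your argument is correct and is essentially the proof the paper itself delegates to the literature: the paper's proof of this proposition is the single line ``the proof follows along the same lines as in \cite{vuong2011} for hierarchical B-splines'', and what you write out --- Kraft-style elimination level by level on $\Omega^{\ell}\setminus\Omega^{\ell+1}$ for (i), and the two-scale relation with support containment for (ii) and (iii) --- is exactly that standard argument. The one point worth settling is the ``main obstacle'' you flag at the end: the support-containment property of the refinement coefficients is in fact a consequence of (P1) at level $\ell+1$, so no extra structural assumption on the sequence $\{\Psi^{\ell}\}$ is needed. Indeed, if $\psi=\sum_{\mu\in\Psi^{\ell+1}}a_{\mu}\mu$ and $U$ is any open set disjoint from $\supp\psi$, then $\sum_{\mu}a_{\mu}\mu$ vanishes on $U$, and local linear independence forces $a_{\mu}=0$ for every $\mu$ that does not vanish identically on $U$; since this holds for every such $U$, every $\mu$ with $a_{\mu}\neq 0$ vanishes outside the closed set $\supp\psi$, i.e.\ $\supp\mu\subseteq\supp\psi$. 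With that observation your proof is complete as written (up to the usual measure-zero care in applying (P1) to the sets $\Omega^{\ell}\setminus\Omega^{\ell+1}$, which the cited reference handles in the same way).
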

\begin{proof}
The proof follows along the same lines as in \cite{vuong2011} for hierarchical B-splines.
\end{proof}

Proposition~\ref{prop:hbasis} summarizes the key properties of a hierarchical set of basis functions constructed according to Definition~\ref{dfn:hbasis}, when the underlying 
sequence of bases $\Psi^\ell$ satisfies only properties (P1)-(P2).

The results in Proposition~1 remain valid when additional assumptions are considered~\cite{giannelli2014}. In particular, if the basis functions in $\Psi^\ell$, 
for $\ell=0,\ldots,N-1$ are \emph{non-negative}, the hierarchical basis functions are also non-negative. Moreover, the partition of unity property in the hierarchical 
setting can be recovered  by considering the \emph{truncated basis} for hierarchical spline spaces \cite{giannelli2014}. In this case, the partition of unity property at 
each level $\ell$ is also required together with the positiveness of the coefficients in the refinement mask. Even if the construction of $C^1$ functions on two patch 
geometries considered in the previous section does not satisfy the non-negativity and partition of unity properties, we could still apply the truncation mechanism to 
reduce the support of coarser basis functions in the $C^1$ hierarchical basis. Obviously, the resulting truncated basis would not satisfy the other interesting properties 
of truncated hierarchical B-splines, see \cite{giannelli2012,giannelli2014}.

\subsection{The $C^1$ hierarchical isogeometric space} \label{subsec:C1hierarchy}

By following the construction for the $C^1$ isogeometric spline space presented in Section~\ref{sec:C1space}, we can now introduce its hierarchical extension. We recall that 
instead of considering the full $C^1$ space $\UV$ at any hierarchical level, we may restrict to the simpler subspace $\UW$, whose dimension does not depend on the 
functions $\alpha^{(L)}$, $\alpha^{(R)}$ and $\beta$, and it has analogous approximation properties as the full space.

We consider an initial knot vector $\UXI{p}{r,0} \equiv \UXI{p}{r}$ as defined in \eqref{eq:kv} for then introducing the sequence of knot vectors with respect to a fixed degree $p$
\[
\UXI{p}{r,0}, \UXI{p}{r,1} \ldots, \UXI{p}{r,N-1}, 
\]
where each knot vector
\begin{equation*}
\UXI{p}{r,\ell} = \{\underbrace{0,\ldots,0}_{(p+1)-\mbox{\scriptsize times}},
\underbrace{\tauN{1}^{\ell},\ldots ,\tauN{1}^{\ell}}_{(p-r) - \mbox{\scriptsize times}}, 
\underbrace{ \tauN{2}^{\ell},\ldots ,\tauN{2}^{\ell}}_{(p-r) - \mbox{\scriptsize times}},\ldots, 
\underbrace{ \tauN{k^{\ell}}^{\ell},\ldots ,\tauN{k^{\ell}}^{\ell}}_{(p-r) - \mbox{\scriptsize times}},
\underbrace{1,\ldots,1}_{(p+1)-\mbox{\scriptsize times}}\},  
\end{equation*}
for $\ell=1,\ldots,N-1$, is obtained via dyadic refinement of the knot vector of the previous level, keeping the same degree and regularity, and therefore $k^\ell = 2 k^{\ell-1} + 1$. 
We denote by $\US{p}{r,\ell}$ the 
univariate spline space in $[0,1]$ with respect to the open knot vector $ \UXI{p}{r,\ell}$, and let $\UN{p}{r,\ell}{i}$, for $i \in \I^\ell = \{ 0,\ldots,p+k^\ell(p-r) \}$, 
be the associated B-splines. In addition, as in the one-level case, $\US{p}{r+1,\ell}$ and $\US{p-1}{r,\ell}$ ($\UN{p}{r+1,\ell}{i}$ and $\UN{p-1}{r,\ell}{i}$) indicate the subspaces 
(and their basis functions) of higher regularity and lower degree, respectively. We also denote by 
\[
 n^\ell= p+1+ k^\ell(p-r), \;
 \mbox{ }n_0^\ell= p+1+ k^\ell(p-r-1), \;\mbox{ and }\;
 n_1^\ell= p + k^\ell(p-r-1),
\]
the dimensions of the spline spaces $\US{p}{r,\ell}$, $\US{p}{r+1,\ell}$ and $\US{p-1}{r,\ell}$, respectively, and, analogously to $\I^\ell$, we introduce the index sets 
\[
 \I_0^\ell = \{0, \ldots, n_{0}^\ell-1  \}, \qquad
 \I_1^\ell=\{0,\ldots, n_{1}^\ell-1 \},
\]
corresponding to functions in $\US{p}{r+1,\ell}$ and $\US{p-1}{r,\ell}$, respectively.

Let 
\[ 
\UV^0\subset\UV^1\subset\ldots\subset\UV^{N-1}
\]
be a sequence of nested $C^1$ isogeometric spline spaces, with $\UV^\ell$ defined on the two-patch domain 
$\Omega = \Omega^{(L)} \cup \Omega^{(R)}$ with respect to the spline space of level $\ell$. Analogously to the construction detailed in Section~\ref{subsec:spaceW}, for each level 
$0 \le \ell \le N-1$ let us consider the subspace
\[
\UW^\ell = \mathrm{span} \Phi^\ell, \quad \text{ with } \Phi^\ell = \PhiB_{\Omega^{(L)}}^\ell \cup \PhiB_{\Omega^{(R)}}^\ell \cup  
\PhiB_{\Gamma_0}^\ell \cup \PhiB_{\Gamma_1}^\ell,
\]
where the basis functions are given by
\begin{equation*}
\PhiB_{\Omega^{(S)}}^\ell = \left\{ \phiS{S}{i}{j} \ : \ i \in \I^\ell \setminus \{0,1 \}; \, j \in \I^\ell \right\} , 
\quad
 \PhiB_{\Gamma_0}^\ell =  \left\{ \phiT{0}{i} \ : \ i \in \I_0^\ell \right\}, \quad 
 \PhiB_{\Gamma_1}^\ell =  \left\{  \phiT{1}{i} \ : \ i \in \I_1^\ell  \right\},
\end{equation*}
with $S \in \{L,R \}$, directly defined as in \eqref{eq:Phi-Omega} and \eqref{eq:Phi-Gamma} for the one-level case.

By considering a domain hierarchy as in \eqref{eq:domh} on the two-patch domain $\Omega \equiv \Omega^0$, and the sets of isogeometric functions $\Phi^\ell$ at different 
levels, we arrive at the following definition.

\begin{defn}\label{dfn:hC1space} The $C^1$ hierarchical isogeometric space $\UW_H$  with respect to a domain hierarchy of the two-patch domain $\Omega$, that satisfies 
\eqref{eq:domh} with $\Omega^0=\Omega$,
is defined as
\[
\UW_H = \myspan{\cal W}
\quad\text{with}\quad
{\cal W} = \left\{\phi\in\Phi^\ell : \supp\!^0\phi \subseteq \Omega^\ell \wedge
\supp\!^0\phi\not\subseteq\Omega^{\ell+1}\right\}.\]
\end{defn}

In the remaining part of this section we want to prove that ${\cal W}$ is indeed a basis of the $C^1$ hierarchical isogeometric space $\UW_H$. This requires to verify 
the properties for the abstract definition given in Section~\ref{sec:abstract}, in particular the nestedness of the spaces $\UW^\ell$, and that the one-level $C^1$ bases 
spanning each $\UW^{\ell}$, for $\ell=0,\ldots,N-1$, satisfy the hypotheses of Proposition~\ref{prop:hbasis}, i.e. properties (P1)-(P2).
The nestedness of the spaces~$\UW^{\ell}$, $\ell=0,1, \ldots, N-1$, easily follows from definition \eqref{eq:charspaceW}, as stated in the following Proposition.

\begin{prop}\label{prop:nest}
Let $N \in \N$. The sequence of spaces $\UW^{\ell}$, $\ell=0,1,\ldots, N-1$, is nested, i.e.
 \[
  \UW^{0} \subset \UW^{1}  \subset \ldots \subset \UW^{N-1}.
 \]
\end{prop}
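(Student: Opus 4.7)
The plan is to reduce the nestedness of the $\UW^{\ell}$ to the nestedness of the underlying univariate spline spaces via the characterization \eqref{eq:charspaceW}, which says that $\UW^{\ell}$ consists of those $\phi \in \UV^{\ell}$ whose trace along $\Gamma$ lies in $\US{p}{r+1,\ell}$ and whose specific directional derivative $\nabla\phi \cdot (\f{d} \circ \f{F}_0)$ across $\Gamma$ lies in $\US{p-1}{r,\ell}$. With this description in hand the argument splits into three routine ingredients.

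First I would check that the ambient $C^1$ isogeometric spaces are nested, i.e. $\UV^{\ell} \subseteq \UV^{\ell+1}$. The defining conditions \eqref{eq:isogeometric}--\eqref{eq:derivative} involve the geometry mappings and gluing data $\alpha^{(S)}, \beta^{(S)}, \beta$, none of which depend on $\ell$; the only level-dependent ingredient is the tensor-product spline space $\US{p}{r,\ell} \otimes \US{p}{r,\ell}$ to which the pull-backs $\phi \circ \f{F}^{(S)}$ must belong. Since the dyadic refinement producing $\UXI{p}{r,\ell+1}$ from $\UXI{p}{r,\ell}$ is a pure knot insertion that preserves both the degree $p$ and the regularity $r$, we have $\US{p}{r,\ell} \subseteq \US{p}{r,\ell+1}$, and hence the tensor-product spaces are nested as well. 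This gives $\UV^{\ell} \subseteq \UV^{\ell+1}$.

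Second I would observe the analogous nestedness for the spline spaces that appear in the trace and derivative conditions: $\US{p}{r+1,\ell} \subseteq \US{p}{r+1,\ell+1}$ and $\US{p-1}{r,\ell} \subseteq \US{p-1}{r,\ell+1}$. Both follow for exactly the same reason, since refining the breakpoints dyadically while leaving the degree and regularity parameters untouched is still a knot-insertion operation on each of these univariate spaces.

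Finally, for any $\phi \in \UW^{\ell}$ I would simply chain these inclusions. By \eqref{eq:charspaceW} at level $\ell$, $\phi \in \UV^{\ell}$ with $\phi \circ \f{F}_0 \in \US{p}{r+1,\ell}$ and $\nabla\phi \cdot (\f{d} \circ \f{F}_0) \in \US{p-1}{r,\ell}$. Applying the three inclusions above, $\phi \in \UV^{\ell+1}$, its trace lies in $\US{p}{r+1,\ell+1}$, and its transversal derivative lies in $\US{p-1}{r,\ell+1}$, so \eqref{eq:charspaceW} at level $\ell+1$ yields $\phi \in \UW^{\ell+1}$. Strictness of the inclusion, if desired, follows from the fact that the refined univariate space contains B-splines whose support is strictly contained in one subinterval of the coarser mesh, hence cannot be expressed at level $\ell$. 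There is no real obstacle in this proof; the entire argument hinges on having the clean characterization \eqref{eq:charspaceW}, which allows the two-patch $C^1$ constraint to be checked level-by-level on quantities whose underlying univariate spaces are manifestly nested under dyadic refinement.
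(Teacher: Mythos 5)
Your proposal is correct and follows essentially the same route as the paper's proof: both reduce the claim to the characterization \eqref{eq:charspaceW}, the nestedness of the ambient spaces $\UV^{\ell}$, and the nestedness of the univariate spline spaces $\US{p}{r+1,\ell}$ and $\US{p-1}{r,\ell}$ under dyadic knot insertion. You merely spell out in more detail why these spline-space inclusions hold, and add an optional remark on strictness, neither of which changes the argument.
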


\begin{proof}
Let $\ell = 0, \ldots, N-2$, and $\phi \in \UW^\ell \subset \UV^\ell$. By definition \eqref{eq:spaceV} 
the spaces $\UV^{\ell}$ are nested, hence $\phi \in \UV^{\ell} \subset \UV^{\ell+1}$. Since the spline spaces $\US{p}{r+1,\ell}$ and $\US{p-1}{r,\ell}$ are nested, too, we 
have $\phi \circ \f{F}_{0} \in \US{p}{r+1,\ell} \subset \US{p}{r+1,\ell+1}$ and $\nabla \phi \cdot (\f{d} \circ \f{F}_{0}) \in \US{p-1}{r,\ell} \subset \US{p-1}{r,\ell+1}$, 
which implies that $\phi \in \UW^{\ell+1}$.
\end{proof}

The locality and compactness of the support of these functions in (P2) comes directly by construction and by the same property for standard B-splines, 
see \eqref{eq:stdbasis}-\eqref{eq:expr_phi1} and Figure~\ref{basisfig1}. The property of local linear independence in (P1) instead is proven in the following Proposition.
\begin{prop}\label{prop:lli}
The set of basis functions $ \PhiB^\ell = \PhiB_{\Omega^{(L)}}^\ell \cup \PhiB_{\Omega^{(R)}}^\ell \cup \PhiB_{\Gamma_0}^\ell \cup \PhiB_{\Gamma_1}^\ell,$ 
is locally linearly independent, for $\ell=0,\ldots,N-1$.
\end{prop}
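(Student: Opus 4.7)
The plan is to reduce the local linear independence of $\Phi^\ell$ to the well-known local linear independence of univariate B-splines, working patch by patch and cell by cell. Fix an open subdomain $\Omega' \subset \Omega$ and suppose that $\sum c_\phi \phi \equiv 0$ on $\Omega'$, where the sum ranges over those functions in $\Phi^\ell$ that do not vanish identically on $\Omega'$. I want to conclude $c_\phi = 0$ for every such $\phi$.

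The first step is to pick a Bézier cell $C$ of the tensor-product mesh contained in $\Omega^{(L)}$ that meets $\Omega'$ in a set of positive measure (if no such cell exists on the left patch, swap $L$ and $R$). Pulling back the identity via $\f{F}^{(L)}$, the functions in $\PhiB_{\Omega^{(R)}}^\ell$ contribute nothing on $C$, and by \eqref{eq:stdbasis}, \eqref{eq:expr_phi0}, \eqref{eq:expr_phi1} the surviving terms can be grouped by the first-variable factor $\UN{p}{r,\ell}{i}(\xi_1)$ to give
\begin{equation*}
A_0(\xi_2) \UN{p}{r,\ell}{0}(\xi_1) + A_1(\xi_2) \UN{p}{r,\ell}{1}(\xi_1) + \sum_{i \geq 2} A_i(\xi_2) \UN{p}{r,\ell}{i}(\xi_1) = 0
\end{equation*}
on the pullback of $C$, where $A_0 = \sum_i c_i^{\Gamma_0} \UN{p}{r+1,\ell}{i}$, $A_1 = \sum_i c_i^{\Gamma_0}\bigl[\UN{p}{r+1,\ell}{i} + \beta^{(L)} \tfrac{\tauN{1}^\ell}{p} (\UN{p}{r+1,\ell}{i})'\bigr] + \alpha^{(L)} \sum_i c_i^{\Gamma_1} \UN{p-1}{r,\ell}{i}$, and $A_i = \sum_j c_{i,j}^L \UN{p}{r,\ell}{j}$ for $i \geq 2$.

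By the standard local linear independence of univariate B-splines in $\xi_1$, each $A_i$ whose partner $\UN{p}{r,\ell}{i}$ does not vanish on the $\xi_1$-range of $C$ must itself vanish on the $\xi_2$-range of $C$. From $A_0 \equiv 0$, local linear independence of the basis of $\US{p}{r+1,\ell}$ yields $c_i^{\Gamma_0} = 0$ for the relevant indices. Substituting into $A_1 \equiv 0$ reduces it to $\alpha^{(L)}(\xi_2) \sum_i c_i^{\Gamma_1} \UN{p-1}{r,\ell}{i}(\xi_2) \equiv 0$; since $\alpha^{(L)}$ is nowhere zero by~\eqref{eq:alpha_cond}, dividing and invoking local linear independence in $\US{p-1}{r,\ell}$ forces $c_i^{\Gamma_1} = 0$. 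Finally, $A_i \equiv 0$ for $i \geq 2$, together with local linear independence in $\US{p}{r,\ell}$, gives $c_{i,j}^L = 0$.

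Iterating over all Bézier cells of $\Omega^{(L)}$ meeting $\Omega'$ shows that every coefficient attached to a function of $\PhiB_{\Omega^{(L)}}^\ell \cup \PhiB_{\Gamma_0}^\ell \cup \PhiB_{\Gamma_1}^\ell$ non-vanishing on $\Omega'$ is zero; the same argument applied to cells of $\Omega^{(R)}$, where the interface coefficients have already been annihilated, kills the remaining $c_{i,j}^R$. The main obstacle is that the two families of interface basis functions intertwine in the middle coefficient $A_1$; the key is to analyze $A_0$ first so that $A_1$ isolates the $\phi_i^{\Gamma_1}$-contributions, at which point the analysis-suitable $G^1$ condition $\alpha^{(L)}\alpha^{(R)}<0$ becomes essential to divide out $\alpha^{(L)}$ and close the argument.
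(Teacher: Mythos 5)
Your proof is correct and follows essentially the same route as the paper: group the pulled-back identity by the $\xi_1$-factors, use univariate local linear independence in $\xi_1$ to isolate the coefficient functions of $\xi_2$, then eliminate the $\Gamma_0$ coefficients first, divide out the nowhere-vanishing $\alpha^{(S)}$ to kill the $\Gamma_1$ coefficients, and finish with the standard B-splines. The only differences are cosmetic — you work Bézier cell by cell with the basis $\{\UN{p}{r}{0},\UN{p}{r}{1},\dots\}$ where the paper uses point neighborhoods and the system $\{\UN{p}{r}{0}+\UN{p}{r}{1},\UN{p}{r}{1}\}\cup\{\UN{p}{r}{j}\}_{j\geq 2}$ — though you should state explicitly that the pass over $\Omega^{(R)}$ must also handle interface coefficients of functions meeting $\Omega'$ only through the right patch (the argument there is identical).
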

\begin{proof} Since we have to prove the statement for any hierarchical level $\ell$, we just remove the superscript $\ell$ in the proof to simplify the notation.
Recall that the functions in $\PhiB$ are linearly independent. It is well known that the functions in $\PhiB_{\Omega^{(L)}} \cup \PhiB_{\Omega^{(R)}}$ are locally linearly independent, 
as they are (mapped) standard B-splines. Furthermore, it is also well known, or easy to verify, that each of the following sets of univariate functions is locally linearly independent
\begin{itemize} 
 \item[(a)] $\{ \UN{p}{r}{0} + \UN{p}{r}{1},\UN{p}{r}{1}\} \cup\{ \UN{p}{r}{i} \}_{i \in \I \setminus \{0,1 \}}$, 
 \item[(b)] $\{\UN{p}{r+1}{i} \}_{i \in \I_0}$,
 \item[(c)] $\{\UN{p-1}{r}{i} \}_{i \in \I_1}$.
\end{itemize}
We prove that the set of functions $\Phi$ is locally linearly independent, which means that, for any open 
set $\widetilde{\Omega} \subset \Omega$ the functions of $\Phi$ that do not vanish in $\widetilde{\Omega}$ are linearly independent 
on $\widetilde{\Omega}$. Let $\widetilde{\I}_{0} \subset \I_{0}$, $\widetilde{\I}_{1} \subset \I_{1}$ and $\widetilde{\I}^{(S)}_{j} \subset \I$, $j \in \I \setminus \{0,1 \}$, 
$S \in \{L,R \}$, be the sets of indices corresponding to those functions~$\phiT{0}{i}$, $\phiT{1}{i}$ and $\phiS{S}{j}{i}$, respectively, that do not vanish 
on~$\widetilde{\Omega}$. Then the equation
\begin{equation} \label{eq:refphi}
 \sum_{i\in \widetilde{\I}_0} \mu_{0,i} \phiT{0}{i}(\f{x}) + \sum_{i\in \widetilde{\I}_1} \mu_{1,i} \phiT{1}{i}(\f{x}) + 
 \sum_{S \in \{L,R \}} \sum_{j \in \I \setminus \{0,1 \} } \sum_{i\in \widetilde{\I}^{(S)}_j} \mu^{(S)}_{j,i} \phiS{S}{j}{i}(\f{x}) =0  , \mbox{ }\f{x} \in \widetilde{\Omega}
\end{equation}
has to imply $\mu_{0,i}=0$ for all $i \in \widetilde{\I}_0$, $\mu_{1,i}=0$ for all $i \in \widetilde{\I}_1$, and $\mu^{(S)}_{j,i}=0$ for all 
$i \in \widetilde{\I}^{(S)}_{j}$, $j \in \I \setminus \{0,1 \}$, $S \in \{L,R \}$. Equation~\eqref{eq:refphi} implies that
\begin{align*} 
  &\sum_{i\in \widetilde{\I}_0} \mu_{0,i} \Big(\phiT{0}{i} \circ \f{F}^{(S)}\Big)(\xi_1,\xi_2) + 
  \sum_{i\in \widetilde{\I}_1} \mu_{1,i} \Big(\phiT{1}{i} \circ \f{F}^{(S)}\Big)(\xi_1,\xi_2) \\
  & + \sum_{j \in \I \setminus \{0,1 \} } \sum_{i\in \widetilde{\I}^{(S)}_j} \mu^{(S)}_{j,i} \Big( \phiS{S}{j}{i} \circ \f{F}^{(S)} \Big) (\xi_1,\xi_2) =0 , 
\end{align*}
for $(\xi_1,\xi_2) \in \widetilde{\Omega}^{(S)}$ and $S \in \{L,R \}$, where $\widetilde{\Omega}^{(S)} \subseteq (0,1)^2$ are the corresponding parameter domains 
for the geometry mappings $\f{F}^{(S)}$ such that the closure of $\widetilde \Omega$ is
\begin{equation*}
\mathrm{cl} (\widetilde{\Omega})=\mathrm{cl}\left( \f{F}^{(L)}(\widetilde{\Omega}^{(L)}) \cup \f{F}^{(R)}(\widetilde{\Omega}^{(R)}) \right). 
\end{equation*}
By substituting the functions $\phiT{0}{i} \circ \f{F}^{(S)}$, $\phiT{1}{i} \circ \f{F}^{(S)}$ and $\phiS{S}{j}{i} \circ \f{F}^{(S)}$ by their corresponding expressions, we obtain
\begin{align*}
  &\sum_{i\in \widetilde{\I}_0} \mu_{0,i}\left(\UN{p}{r+1}{i}(\xi_2) \Big( \UN{p}{r}{0}(\xi_1) + \UN{p}{r}{1}(\xi_1)\Big) +
  \beta^{(S)}(\xi_2) \Big(\UN{p}{r+1}{i}\Big)'(\xi_2) \frac{\tauN{1}}{p}\UN{p}{r}{1}(\xi_1) \right)  \\
  & + \sum_{i\in \widetilde{\I}_1} \mu_{1,i} \left( \alpha^{(S)}(\xi_2)  \UN{p-1}{r}{i}(\xi_2) \UN{p}{r}{1}(\xi_1) \right) +
  \sum_{j \in \I \setminus \{0,1 \} } \sum_{i \in \widetilde{\I}^{(S)}_{j}} \mu_{j,i}^{(S)} \UN{p}{r}{j}(\xi_1)  \UN{p}{r}{i}(\xi_2) =0,
\end{align*}
for $(\xi_1,\xi_2) \in \widetilde{\Omega}^{(S)}$ and $S \in \{L,R \}$, which can be rewritten as
\begin{align}
& \Big(\UN{p}{r}{0}(\xi_1)+\UN{p}{r}{1}(\xi_1)\Big) \Big(\sum_{i\in \widetilde{\I}_0} \mu_{0,i}\UN{p}{r+1}{i}(\xi_2)\Big)  + \UN{p}{r}{1}(\xi_1) 
 \Big( \frac{\tauN{1}}{p} \sum_{i\in \widetilde{\I}_0} \mu_{0,i} \beta^{(S)}(\xi_2) \Big(\UN{p}{r+1}{i}\Big)'(\xi_2) \Big) \label{eq:fullterm} \\
 & + \UN{p}{r}{1}(\xi_1) \Big( \sum_{i\in \widetilde{\I}_1} \mu_{1,i} \alpha^{(S)}(\xi_2)  \UN{p-1}{r}{i}(\xi_2) \Big) + 
 \sum_{j \in \I \setminus \{0,1 \} } \UN{p}{r}{j}(\xi_1) \Big( \sum_{i \in \widetilde{\I}^{(S)}_{j}} \mu_{j,i}^{(S)}   \UN{p}{r}{i}(\xi_2) \Big) =0. \nonumber
\end{align}
Now, since $\widetilde \Omega$ and $\widetilde \Omega^{(S)}$ are open, for each $i \in \widetilde{\I}_0$ there exists a 
point $({\xi}_{1}^{(S)},{\xi}_{2}^{(S)}) \in \widetilde{\Omega}^{(S)}$, with $S \in \{L, R\}$, such that $\phiT{0}{i}$ does not vanish 
in a neighborhood ${Q} \subset \widetilde \Omega^{(S)}$ of the point. Due to the fact that the univariate functions $\UN{p}{r}{0}+\UN{p}{r}{1}$, $\UN{p}{r}{1}$ and 
$\UN{p}{r}{j}$, $j \in \I \setminus \{0,1 \}$ are locally linearly independent and that 
$\UN{p}{r}{0}({\xi}_{1}^{(S)}) +\UN{p}{r}{1}({\xi}_{1}^{(S)}) \neq 0$, we get that 
\begin{equation*} 
 \sum_{i \in \widetilde{\I}_0} \mu_{0,i}\UN{p}{r+1}{i}(\xi_2)=0, \mbox{ }\text{for }\xi_2 \text{ such that } (\xi^{(S)}_{1},\xi_2) \in Q.
\end{equation*}
This equation and the local linear independence of the univariate functions $\{\UN{p}{r+1}{i} \}_{i\in \widetilde{\I}_0}$ imply that $\mu_{0,i} = 0$. Applying 
this argument for all $i \in \widetilde{\I}_0$, we obtain $\mu_{0,i}=0$, $i \in \widetilde{\I}_0$, and the term~\eqref{eq:fullterm} simplifies to
\begin{equation} \label{eq:term_simpl1}
 \UN{p}{r}{1}(\xi_1) \Big( \sum_{i\in \widetilde{\I}_1} \mu_{1,i} \alpha^{(S)}(\xi_2)  \UN{p-1}{r}{i}(\xi_2) \Big) + 
 \sum_{j \in \I \setminus \{0,1 \} } \UN{p}{r}{j}(\xi_1) \Big( \sum_{i \in \widetilde{\I}^{(S)}_{j}} \mu_{j,i}^{(S)}   \UN{p}{r}{i}(\xi_2) \Big) =0. 
\end{equation}
Similarly, we can obtain for each $i \in \widetilde{I}_{1}$  
\begin{equation} \label{eq:termwithalpha}
 \sum_{i\in \widetilde{\I}_{1}} \mu_{1,i} \, \alpha^{(S)}(\xi_2) \UN{p-1}{r}{i}(\xi_2)=0, \mbox{ }\text{for }\xi_2 \text{ such that } (\xi^{(S)}_{1},\xi_2) \in Q,
\end{equation}
with the corresponding points~$(\xi_1^{(S)},\xi_2) \in \widetilde{\Omega}$ and neighborhoods~$Q \subset \widetilde{\Omega}$. 
Since the function $\alpha^{(S)}$ is just a linear function which never takes the value zero, see~\eqref{eq:alpha_cond}, equation~\eqref{eq:termwithalpha} implies that
\[
  \sum_{i\in \widetilde{\I}_{1}} \mu_{1,i} \, \UN{p-1}{r}{i}(\xi_2)=0, \mbox{ }\text{for }\xi_2 \text{ such that } (\xi^{(S)}_{1},\xi_2) \in Q.
\]
The local linear independence of the univariate functions $\{ \UN{p-1}{r}{i} \}_{i\in \widetilde{\I}_{1}}$ implies as before that 
$\mu_{1,i}=0$, $i \in \widetilde{\I}_{1}$, and therefore
the term~\eqref{eq:term_simpl1} simplifies further to 
\[
\sum_{j \in \I \setminus \{0,1 \} } \UN{p}{r}{j}(\xi_1) \Big( \sum_{i \in \widetilde{\I}^{(S)}_{j}} \mu_{j,i}^{(S)}   \UN{p}{r}{i}(\xi_2) \Big) =0. 
\]
Finally, $\mu^{(S)}_{j,i}=0$, $i \in \widetilde{\I}^{(S)}_{j}$, $j \in \I \setminus \{0,1 \}$, $S \in \{L,R \}$, follows directly from the fact that the functions in 
$\PhiB_{\Omega^{(L)}} \cup \PhiB_{\Omega^{(R)}}$ are locally linearly independent.
\end{proof}

Finally, we have all what is necessary to prove the main result.
\begin{thm}\label{cor:hbasis}
${\cal W}$ is a basis for the $C^1$ hierarchical space $\UW_H$.
\end{thm}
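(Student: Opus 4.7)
The plan is straightforward: since Definition~\ref{dfn:hC1space} already defines $\UW_H = \myspan{\cal W}$, the only thing left to establish is that the functions in ${\cal W}$ are linearly independent. So the proof reduces to checking that we are in the setting of Proposition~\ref{prop:hbasis}(i) applied to the sequence $\UW^0 \subset \UW^1 \subset \ldots \subset \UW^{N-1}$ with bases $\Phi^\ell$.

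First I would verify the three hypotheses of the abstract framework of Section~\ref{sec:abstract}. Nestedness of the spaces $\UW^\ell$ is exactly the content of Proposition~\ref{prop:nest}. Property (P1), local linear independence of each $\Phi^\ell$, is exactly Proposition~\ref{prop:lli}. Property (P2), locality and compactness of the supports, is an immediate consequence of the explicit formulas \eqref{eq:stdbasis}--\eqref{eq:expr_phi1}: the functions in $\PhiB_{\Omega^{(S)}}^\ell$ are (pull-backs of) standard tensor-product B-splines supported in a single patch, while the interface functions $\phiT{0}{i}$ and $\phiT{1}{i}$ are finite linear combinations of tensor-product B-splines on the two patches glued across $\Gamma$, so their supports are contained in a fixed number of knot spans on either side of the interface.

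With (P1), (P2) and the nestedness in place, I would simply invoke Proposition~\ref{prop:hbasis}(i) to conclude that the hierarchical set ${\cal W}$, built from the $\Phi^\ell$ according to Definition~\ref{dfn:hbasis} (which is precisely the construction in Definition~\ref{dfn:hC1space}), is linearly independent. Combined with $\UW_H = \myspan{\cal W}$, this gives that ${\cal W}$ is a basis of $\UW_H$.

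Since the heavy lifting has already been done in Propositions~\ref{prop:nest} and \ref{prop:lli}, I do not expect any genuine obstacle here; the proof is essentially a two-line application of the abstract result. The only point deserving a brief comment is that the domain hierarchy used in Definition~\ref{dfn:hC1space} is exactly the kind required by the abstract framework (closed nested subdomains of $\Omega = \Omega^0$), so Proposition~\ref{prop:hbasis} applies verbatim.
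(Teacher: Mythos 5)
Your proposal is correct and follows essentially the same route as the paper: both verify nestedness via Proposition~\ref{prop:nest}, local linear independence (P1) via Proposition~\ref{prop:lli}, and the support property (P2) directly from the definitions \eqref{eq:stdbasis}--\eqref{eq:expr_phi1}, and then invoke Proposition~\ref{prop:hbasis}. Your additional remark that spanning is already built into Definition~\ref{dfn:hC1space}, so only linear independence remains to be checked, is a small but accurate clarification of the same argument.
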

\begin{proof}
The result holds because the spaces in Definition~\ref{dfn:hC1space} satisfy the hypotheses in Proposition~\ref{prop:hbasis}. In particular, we have the nestedness of the spaces 
by Proposition~\ref{prop:nest}, and for the basis functions in $\Phi^\ell$ the local linear independence (P1) by Proposition~\ref{prop:lli}, and the local and compact support (P2) 
by their definition in \eqref{eq:stdbasis}-\eqref{eq:expr_phi1}.
\end{proof}

\begin{rem}
In contrast to the here considered $C^1$ basis functions for the case of analysis-suitable $G^1$ two-patch geometries, the analogous $C^1$ basis functions for the multi-patch 
case based on~\cite{KaSaTa17c} are, in general, not locally linearly dependent. Due to the amount of notation needed and to their technicality, we do not report here counterexamples, 
but what happens, even in some basic domain configurations, is that the basis functions defined in the vicinity of a vertex may be locally linearly dependent. As a consequence, the 
construction of a hierarchical $C^1$ space requires a different approach, whose investigation is beyond the scope of the present paper. 
\end{rem}

\section{Refinement mask and implementation} \label{sec:implementation}

In this section we give some details about practical aspects regarding the implementation of isogeometric methods based on the hierarchical space $\UW_H$. 
First, we specify the refinement masks, which allow to write the basis functions of $\Phi^\ell$ as linear combinations of the basis functions of $\Phi^{\ell+1}$. The refinement 
masks are important, as they are needed, for instance, for knot insertion algorithms and some operators in multilevel preconditioning. Then, we focus on the implementation of 
the hierarchical space in the open Octave/Matlab software GeoPDEs \cite{vazquez2016}, whose principles can be applied almost identically to any other isogeometric code. 
The implementation employs the refinement masks for the evaluation of basis functions too.

\subsection{Refinement masks} \label{subsec:refinement}
Let us recall the notations and assumptions from Section~\ref{subsec:C1hierarchy} for the multi-level setting of the spline spaces~$\UW^{\ell}$, $\ell=0,1,\ldots, N-1$,  
where the upper index $\ell$ refers to the specific level of refinement. We will use the same upper index in an analogous manner for further notations, 
which have been mainly introduced in Section~\ref{subsec:representation} for the one-level case, such as for the vectors of functions $\VTN{0}$, $\VTN{1}$, $\VTN{2}$ and 
$\hatPhiT{S}{0}$, $\hatPhiT{S}{1}$, $\hatPhiS{S}{S}$, $S \in \{L,R \}$, and for the transformation matrices $\widehat{B}$, $\widetilde{B}^{(S)}$ and $\overline{B}\mbox{}^{(S)}$, 
$S \in \{L,R \}$.

Let $\R_{+}$ be the set of 
non-negative real numbers. Based on basic properties of B-splines, there exist refinement matrices (refinement masks) $\Lambda_{p}^{r,\ell+1} 
\in \R_{+}^{n^{\ell} \times n^{\ell+1}}$, $\Lambda_{p}^{r+1,\ell+1} \in \R_{+}^{n_0^{\ell} \times n_0^{\ell+1}}$ and $\Lambda_{p-1}^{r,\ell+1} \in \R_{+}^{n_1^{\ell} 
\times n_1^{\ell+1}}$ such that
\[
[\HUN{p}{r}{i}{\ell}(\xi)]_{i \in \I^{\ell}} = \Lambda_{p}^{r,\ell+1} [\HUN{p}{r}{i}{\ell+1}(\xi)]_{i \in \I^{\ell+1}},
\]
\[
 [\HUN{p}{r+1}{i}{\ell}(\xi)]_{i \in \I_0^{\ell}} = \Lambda_{p}^{r+1,\ell+1} [\HUN{p}{r+1}{i}{\ell+1}(\xi)]_{i \in \I_0^{\ell+1}},
\]
and
\[
[\HUN{p-1}{r}{i}{\ell}(\xi)]_{i \in \I_1^{\ell}} = \Lambda_{p-1}^{r,\ell+1} [\HUN{p-1}{r}{i}{\ell+1}(\xi)]_{i \in \I_1^{\ell+1}}.
\]
These refinement matrices are banded matrices with a small bandwidth. Furthermore, using an analogous notation to Section~\ref{subsec:representation} for the vectors of functions, 
the refinement mask between the tensor-product spaces $\US{p}{r,\ell} \otimes \US{p}{r,\ell}$ and $\US{p}{r,\ell+1} \otimes \US{p}{r,\ell+1}$ is obtained by refining in each 
parametric direction as a Kronecker product, and can be written in block-matrix form as
\begin{equation} \label{eq:refining_relation}
\left[\begin{array}{c} \VTN{0}^\ell(\xi_1,\xi_2)\\ \VTN{1}^\ell(\xi_1,\xi_2)\\ \VTN{2}^\ell(\xi_1,\xi_2) \end{array}\right]= 
(\Lambda_{p}^{r,\ell+1} \otimes \Lambda_{p}^{r,\ell+1})
\left[\begin{array}{c} \VTN{0}^{\ell+1}(\xi_1,\xi_2)\\ \VTN{1}^{\ell+1}(\xi_1,\xi_2)\\ \VTN{2}^{\ell+1}(\xi_1.\xi_2) \end{array}\right] = 
\left[\begin{array}{ccc} \Theta^{\ell+1}_{00}& \Theta^{\ell+1}_{01}& \Theta_{02}^{\ell+1}\\ 0 & \Theta^{\ell+1}_{11} & \Theta^{\ell+1}_{12} \\ 
0 & 0 & \Theta^{\ell+1}_{22} \end{array}\right]
\left[\begin{array}{c} \VTN{0}^{\ell+1}(\xi_1,\xi_2)\\ \VTN{1}^{\ell+1}(\xi_1,\xi_2)\\ \VTN{2}^{\ell+1}(\xi_1,\xi_2) \end{array}\right] .
\end{equation}
Note that in case of dyadic refinement (as considered in this work), we have $\Theta^{\ell+1}_{02}=0$. 
\begin{prop} \label{lem:refinement}
 It holds that 
 \begin{equation} \label{eq:refinement_basis}
\left[\begin{array}{c} \vec{\phi}^{\ell}_{\Gamma_0}(\f{x})\\  \vec{\phi}^{\ell}_{\Gamma_1}(\f{x})\\ \vec{\phi}^{\ell}_{\Omega^{(L)}}(\f{x}) \\ 
\vec{\phi}^{\ell}_{\Omega^{(R)}}(\f{x})\end{array}\right]
=\left[\begin{array}{cccc} \Lambda_{p}^{r+1,\ell+1}& 0 &  \widetilde{B}^{(L),\ell} \Theta_{12}^{\ell+1}& 
\widetilde{B}^{(R),\ell} \Theta_{12}^{\ell+1}\\
 0 & \frac{1}{2}\Lambda_{p-1}^{r,\ell+1} & \overline{B}\mbox{}^{(L),\ell} \Theta_{12}^{\ell+1}& \overline{B}\mbox{}^{(R),\ell} \Theta_{12}^{\ell+1}\\
0 & 0 & \Theta_{22}^{\ell+1}& 0\\
0 & 0 & 0 & \Theta_{22}^{\ell+1}
\end{array}\right]\,
\left[\begin{array}{c} \vec{\phi}^{\ell+1}_{\Gamma_0}(\f{x})\\  \vec{\phi}^{\ell+1}_{\Gamma_1}(\f{x})\\ \vec{\phi}^{\ell+1}_{\Omega^{(L)}}(\f{x})\\ 
\vec{\phi}^{\ell+1}_{\Omega^{(R)}}(\f{x})\end{array}\right].
\end{equation}
\end{prop}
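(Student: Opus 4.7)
The plan is to establish the refinement relation \eqref{eq:refinement_basis} by treating the four blocks of level-$\ell$ basis functions separately, leveraging both the characterization \eqref{eq:charspaceW} of $\UW^{\ell}$ in terms of traces and specific directional derivatives at the interface, and the nestedness provided by Proposition~\ref{prop:nest}, which guarantees that every function in $\PhiB^\ell$ admits an expansion in $\PhiB^{\ell+1}$.

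The two patch-interior blocks are straightforward. The functions in $\vec{\phi}^{\ell}_{\Omega^{(S)}}$ are mapped standard tensor-product B-splines whose pullback via $\f{F}^{(S)}$ is $\VTN{2}^{\ell}$ (tensor-product B-splines with the first two $\xi_1$-indices removed), and which vanish identically on the opposite patch. Applying the tensor-product refinement identity $\VTN{2}^{\ell}=\Theta_{22}^{\ell+1}\VTN{2}^{\ell+1}$ read off from \eqref{eq:refining_relation} immediately yields the last two rows of \eqref{eq:refinement_basis}, with $\Theta_{22}^{\ell+1}$ in the diagonal entry for the same patch and zero elsewhere.

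For the block $\vec{\phi}^{\ell}_{\Gamma_0}$, I would exploit the trace/derivative characterization. Each $\phiT{0}{i}$ at level $\ell$ has trace $\UN{p}{r+1,\ell}{i}$ on $\Gamma$ and zero specific directional derivative on both patches, while the univariate refinement $\UN{p}{r+1,\ell}{i}=\sum_j(\Lambda_p^{r+1,\ell+1})_{ij}\UN{p}{r+1,\ell+1}{j}$ implies that $\Lambda_p^{r+1,\ell+1}\vec{\phi}^{\ell+1}_{\Gamma_0}$ reproduces both trace and specific derivative of $\vec{\phi}^{\ell}_{\Gamma_0}$ on $\Gamma$. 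Hence the residual $\vec{\phi}^{\ell}_{\Gamma_0}-\Lambda_p^{r+1,\ell+1}\vec{\phi}^{\ell+1}_{\Gamma_0}$ lies in $\UW^{\ell+1}$ (by Proposition~\ref{prop:nest}) with vanishing trace and specific derivative at $\Gamma$; expanding it in $\PhiB^{\ell+1}$ and using the linear independence of $\{\UN{p}{r+1,\ell+1}{j}\}_{j\in\I_0^{\ell+1}}$ and $\{\UN{p-1}{r,\ell+1}{j}\}_{j\in\I_1^{\ell+1}}$ forces it into $\myspan(\PhiB_{\Omega^{(L)}}^{\ell+1}\cup\PhiB_{\Omega^{(R)}}^{\ell+1})$. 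A direct calculation, obtained by substituting \eqref{eq:relation_basic} at level $\ell$ into \eqref{eq:refining_relation} and observing that the $\VTN{2}^{\ell+1}$ contribution on patch $S$ is precisely the pullback of $\vec{\phi}^{\ell+1}_{\Omega^{(S)}}$, identifies the residual coefficients on the two patches as $\widetilde{B}^{(L),\ell}\Theta_{12}^{\ell+1}$ and $\widetilde{B}^{(R),\ell}\Theta_{12}^{\ell+1}$, producing the first row of \eqref{eq:refinement_basis}.

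The analogous argument for $\vec{\phi}^{\ell}_{\Gamma_1}$ compares specific directional derivatives, and here the factor $1/2$ emerges; this is the subtle step I expect to be the main obstacle. By \eqref{eq:expr_phi1}, the specific directional derivative of $\phiT{1}{i}$ at level $\ell$ is proportional to $\UN{p-1}{r,\ell}{i}$ with a constant involving $(\UN{p}{r,\ell}{1})'(0)=p/\tauN{1}^{\ell}$. Since dyadic refinement halves $\tauN{1}$ between consecutive levels, the analogous constant at level $\ell+1$ is twice as large, so matching the derivative of a level-$\ell$ function via a combination of level-$\ell+1$ functions requires scaling the refinement coefficients of $\UN{p-1}{r,\ell}{i}$ by $1/2$, producing the $\tfrac{1}{2}\Lambda_{p-1}^{r,\ell+1}$ block. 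With the derivative matched and the trace already vanishing on both sides, the residual again lies in $\myspan(\PhiB_{\Omega^{(L)}}^{\ell+1}\cup\PhiB_{\Omega^{(R)}}^{\ell+1})$, and the same bookkeeping identifies its coefficients as $\overline{B}^{(L),\ell}\Theta_{12}^{\ell+1}$ and $\overline{B}^{(R),\ell}\Theta_{12}^{\ell+1}$. Finally, the vanishing off-diagonal entries in the top two rows follow from trace/derivative matching together with linear independence: any nonzero contribution of $\vec{\phi}^{\ell+1}_{\Gamma_1}$ in the expansion of $\vec{\phi}^{\ell}_{\Gamma_0}$ would force a nonzero specific derivative (uncancellable by the $\vec{\phi}^{\ell+1}_{\Omega^{(S)}}$ functions, which have zero derivative), contradicting the vanishing derivative of $\vec{\phi}^{\ell}_{\Gamma_0}$, and symmetrically any contribution of $\vec{\phi}^{\ell+1}_{\Gamma_0}$ in the expansion of $\vec{\phi}^{\ell}_{\Gamma_1}$ would produce a nonzero trace.
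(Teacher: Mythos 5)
Your proposal is correct, but it reaches the two interface blocks by a genuinely different route than the paper. Both arguments share the first half: substituting \eqref{eq:relation_basic} into \eqref{eq:refining_relation} (with $\Theta_{02}^{\ell+1}=0$) to read off the $\VTN{2}^{\ell+1}$-component of $\phiT{0}{i}\circ\f{F}^{(S)}$ and $\phiT{1}{i}\circ\f{F}^{(S)}$ as $\widetilde{B}^{(S),\ell}\Theta_{12}^{\ell+1}$ and $\overline{B}\mbox{}^{(S),\ell}\Theta_{12}^{\ell+1}$, which is exactly how the paper also obtains the off-diagonal blocks. Where you diverge is in identifying the diagonal blocks $\Lambda_{p}^{r+1,\ell+1}$ and $\tfrac{1}{2}\Lambda_{p-1}^{r,\ell+1}$ and the vanishing cross-couplings between $\Gamma_0$ and $\Gamma_1$: the paper refines the closed-form expressions \eqref{eq:expr_phi0}--\eqref{eq:expr_phi1} directly, separating the $\xi_1$- and $\xi_2$-refinements and using the explicit entries $\lambda^{\ell+1}_{0,0}=1$, $\lambda^{\ell+1}_{0,1}=\lambda^{\ell+1}_{1,1}=\tfrac12$, $\lambda^{\ell+1}_{1,0}=0$, $\tau_1^{\ell+1}=\tau_1^{\ell}/2$, and then compares coefficients in the $\xi_1$-spline representation; you instead match traces and transversal derivatives at $\Gamma$ via the characterization \eqref{eq:charspaceW}, use nestedness to expand the residual in $\PhiB^{\ell+1}$, and kill the $\Gamma_0$- and $\Gamma_1$-coefficients by linear independence of $\{\UN{p}{r+1,\ell+1}{j}\}$ and $\{\UN{p-1}{r,\ell+1}{j}\}$. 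Your derivation of the factor $\tfrac12$ from $(\UN{p}{r,\ell}{1})'(0)=p/\tau_1^{\ell}$ and the halving of $\tau_1$ under dyadic refinement is exactly the same mechanism the paper encodes in $\lambda^{\ell+1}_{1,1}=\tfrac12$, just expressed at the level of the interface derivative rather than of the mask entries. Your approach buys a shorter and more conceptual identification of the block structure (and makes transparent why the cross-blocks vanish), at the cost of two small verifications you should make explicit: that the basis functions $\phiS{S}{j}{i}$ with $j\ge 2$ have vanishing trace \emph{and} vanishing specific directional derivative on $\Gamma$ (true because $\UN{p}{r}{j}(0)=(\UN{p}{r}{j})'(0)=0$ for $j\ge 2$, and needed so that the residual is forced into $\myspan(\PhiB_{\Omega^{(L)}}^{\ell+1}\cup\PhiB_{\Omega^{(R)}}^{\ell+1})$), and that the paper's normalization $\nabla\phiT{1}{i}\cdot(\f{d}\circ\f{F}_0)=\UN{p-1}{r}{i}$ hides the level-dependent constant $p/\tau_1^{\ell}$, which you correctly track and which is precisely the source of the $\tfrac12$. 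The paper's approach, by contrast, is fully self-contained computation but requires the slightly delicate ``both first terms and both second terms must coincide'' comparison of two expansions.
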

\begin{proof}
We first show the refinement relation for the functions~$\vec{\phi}^{\ell}_{\Gamma_0}$. For this, let us consider the corresponding spline functions 
$\HhatPhiT{S}{0}{\ell}$, $S \in \{L,R \}$. 
On the one hand, using first relation~\eqref{eq:relation_basic} and then relation~\eqref{eq:refining_relation} with the fact that $\Theta^{\ell+1}_{02}=0$, we obtain
\begin{eqnarray*}
\HhatPhiT{S}{0}{\ell}(\xi_1,\xi_2) & =  \left[\begin{array}{ccc} \widehat{B}^{\ell}& \widetilde{B}^{(S),\ell}& 0
\end{array}\right]
\left[\begin{array}{ccc} \HVTN{0}{\ell}(\xi_1,\xi_2) & \HVTN{1}{\ell}(\xi_1,\xi_2) & \HVTN{2}{\ell}(\xi_1,\xi_2) \end{array}\right]^T \\
& =  \left[\begin{array}{ccc} \widehat{B}^{\ell}& \widetilde{B}^{(S),\ell}& 0
\end{array}\right]
\left[\begin{array}{ccc} \Theta^{\ell+1}_{00}& \Theta^{\ell+1}_{01}& 0\\ 0 & \Theta^{\ell+1}_{11} & \Theta^{\ell+1}_{12} \\ 
0 & 0 & \Theta^{\ell+1}_{22} \end{array}\right]
\left[\begin{array}{c} \VTN{0}^{\ell+1}(\xi_1,\xi_2) \\ \VTN{1}^{\ell+1}(\xi_1,\xi_2)\\ \VTN{2}^{\ell+1}(\xi_1,\xi_2) \end{array}\right] ,
\end{eqnarray*}
which is equal to
\begin{equation} \label{eq:rep1}
\left[ \begin{array}{cc} \widehat{B}^{\ell}\Theta^{\ell+1}_{00} & \widehat{B}^{\ell} \Theta^{\ell+1}_{01} + \widetilde{B}^{(S),\ell} \Theta^{\ell+1}_{11}  
\end{array}\right] 
\left[\begin{array}{c} \VTN{0}^{\ell+1}(\xi_1,\xi_2)\\ \VTN{1}^{\ell+1}(\xi_1,\xi_2)  \end{array}\right] 
+ \widetilde{B}^{(S),\ell} \Theta_{12}^{\ell+1}  \VTN{2}^{\ell+1}(\xi_1,\xi_2).
\end{equation}
On the other hand, the functions~$\HhatPhiT{S}{0}{\ell}$ possess the form
\begin{equation*}
 \HhatPhiT{S}{0}{\ell}(\xi_1,\xi_2)  = \left[\HUN{p}{r+1}{i}{\ell}(\xi_2)\right]_{i \in \I_0^{\ell}}\Big(\HUN{p}{r}{0}{\ell}(\xi_1) +\HUN{p}{r}{1}{\ell}(\xi_1) \Big) + \\  
  \frac{\tau^{\ell}_1}{p} \beta^{(S)}(\xi_2) \left[\Big(\HUN{p}{r+1}{i}{\ell}\Big)'(\xi_2)\right]_{i \in \I_0^{\ell}} \HUN{p}{r}{1}{\ell}(\xi_1).
\end{equation*}
By refining the B-spline functions~$\HUN{p}{r+1}{i}{\ell+1}(\xi_2)$, we obtain
\begin{eqnarray*}
 \HhatPhiT{S}{0}{\ell}(\xi_1,\xi_2)  = &\Lambda_{p}^{r+1,\ell+1}\left[\HUN{p}{r+1}{i}{\ell+1}(\xi_2)\right]_{i \in \I_0^{\ell+1}}\Big(\HUN{p}{r}{0}{\ell}(\xi_1) 
 +\HUN{p}{r}{1}{\ell}(\xi_1) \Big) \\  
 & + \displaystyle \frac{\tau_1^{\ell}}{p} \beta^{(S)}(\xi_2) \Lambda_{p}^{r+1,\ell+1} \left[\Big(\HUN{p}{r+1}{i}{\ell+1}\Big)'(\xi_2)\right]_{i \in \I_0^{\ell+1}} 
 \HUN{p}{r}{1}{\ell}(\xi_1).
\end{eqnarray*}
Then, refining the B-spline functions $\HUN{p}{r}{0}{\ell}(\xi_1)+\HUN{p}{r}{1}{\ell}(\xi_1)$ and $\HUN{p}{r}{1}{\ell}(\xi_1)$ leads to 
\begin{eqnarray*}
&\HhatPhiT{S}{0}{\ell}(\xi_1,\xi_2)  = \Lambda_{p}^{r+1,\ell+1}\left[\HUN{p}{r+1}{i}{\ell+1}(\xi_2)\right]_{i \in \I_0^{\ell+1}}
 \Big(\sum_{j \in \I^{\ell+1}} \lambda^{\ell+1}_{0,j} \HUN{p}{r}{j}{\ell+1}(\xi_1) + \sum_{j \in \I^{\ell+1}} \lambda^{\ell+1}_{1,j} \HUN{p}{r}{j}{\ell+1}(\xi_1) \Big) \nonumber\\ 
 & + \displaystyle \frac{\tau_1^{\ell}}{p} \beta^{(S)}(\xi_2) \Lambda_{p}^{r+1,\ell+1} \left[\Big(\HUN{p}{r+1}{i}{\ell+1}\Big)'(\xi_2)\right]_{i \in \I_0^{\ell+1}} 
 \sum_{j \in \I^{\ell+1}} \lambda_{1,j}^{\ell+1}\HUN{p}{r}{j}{\ell+1}(\xi_1), \nonumber
\end{eqnarray*}
where $\lambda^{\ell+1}_{i,j}$ are the entries of the refinement matrix~$\Lambda_{p}^{r,\ell+1}$. Since we refine dyadically, 
we have $\lambda^{\ell+1}_{0,0}=1$, $\lambda^{\ell+1}_{0,1}=\frac{1}{2}$, $\lambda^{\ell+1}_{1,0}=0$, $\lambda^{\ell+1}_{1,1}=\frac{1}{2}$ and 
$\tau_{1}^{\ell+1}=\frac{\tau_1^{\ell}}{2}$, and we get
\begin{eqnarray*}
 & \HhatPhiT{S}{0}{\ell}(\xi_1,\xi_2)  = \bigg( \Lambda_{p}^{r+1,\ell+1}\left[\HUN{p}{r+1}{i}{\ell+1}(\xi_2)\right]_{i \in \I_0^{\ell+1}}
 \Big(\HUN{p}{r}{0}{\ell+1}(\xi_1) + \HUN{p}{r}{1}{\ell+1}(\xi_1) \Big) \\
 & + \displaystyle \frac{\tau_1^{\ell+1}}{p}\beta^{(S)}(\xi_2) \Lambda_{p}^{r+1,\ell+1} \left[\Big(\HUN{p}{r+1}{i}{\ell+1}\Big)'(\xi_2)\right]_{i \in \I_0^{\ell+1}} 
 \HUN{p}{r}{1}{\ell+1}(\xi_1) \bigg) \\
 & + \bigg( \Lambda_{p}^{r+1,\ell+1}\left[\HUN{p}{r+1}{i}{\ell+1}(\xi_2)\right]_{i \in \I_0^{\ell+1}}
 \Big(\sum_{j \in \I^{\ell+1} \setminus \{0,1 \}} (\lambda^{\ell+1}_{0,j} + \lambda^{\ell+1}_{1,j}) \HUN{p}{r}{j}{\ell+1}(\xi_1) \Big) \\
 & + \displaystyle \frac{\tau_1^{\ell}}{p} \beta^{(S)}(\xi_2) \Lambda_{p}^{r+1,\ell+1} \left[\Big(\HUN{p}{r+1}{i}{\ell+1}\Big)'(\xi_2)\right]_{i \in \I_0^{\ell+1}} 
 \sum_{j \in \I^{\ell+1} \setminus \{0,1 \} } \lambda_{1,j}^{\ell+1}\HUN{p}{r}{j}{\ell+1}(\xi_1) \bigg),
\end{eqnarray*}
which is equal to
\begin{eqnarray}
 & \HhatPhiT{S}{0}{\ell}(\xi_1,\xi_2)  = \Lambda_{p}^{r+1,\ell+1} \HhatPhiT{S}{0}{\ell+1}(\xi_1,\xi_2) \nonumber \\
 & + \bigg( \Lambda_{p}^{r+1,\ell+1}\left[\HUN{p}{r+1}{i}{\ell+1}(\xi_2)\right]_{i \in \I_0^{\ell+1}}
 \Big(\sum_{j \in \I^{\ell+1} \setminus \{0,1 \}} (\lambda^{\ell+1}_{0,j} + \lambda^{\ell+1}_{1,j}) \HUN{p}{r}{j}{\ell+1}(\xi_1) \Big) \label{eq:rep2} \\
 & + \displaystyle \frac{\tau_1^{\ell}}{p} \beta^{(S)}(\xi_2) \Lambda_{p}^{r+1,\ell+1} \left[\Big(\HUN{p}{r+1}{i}{\ell+1}\Big)'(\xi_2)\right]_{i \in \I_0^{\ell+1}} 
 \sum_{j \in \I^{\ell+1} \setminus \{0,1 \} } \lambda_{1,j}^{\ell+1}\HUN{p}{r}{j}{\ell+1}(\xi_1) \bigg). \nonumber
\end{eqnarray}
By analyzing the two equal value terms~\eqref{eq:rep1} and \eqref{eq:rep2} with respect to the spline representation in $\xi_1$-direction formed by the 
B-splines~$\HUN{p}{r}{j}{\ell+1}(\xi_1)$, $j \in \I$, 
one can observe that both first terms and both second terms each must coincide. This leads to
\begin{equation*}
\HhatPhiT{S}{0}{\ell}(\xi_1,\xi_2)  =  \Lambda_{p}^{r+1,\ell+1} \HhatPhiT{S}{0}{\ell+1}(\xi_1,\xi_2) +  \widetilde{B}^{(S),\ell} 
\Theta_{12}^{\ell+1}  \VTN{2}^{\ell+1}(\xi_1,\xi_2),
\end{equation*}
which directly implies the refinement relation for the functions~$\vec{\phi}^{\ell}_{\Gamma_0}$.

The refinement for the functions~$\vec{\phi}^{\ell}_{\Gamma_1}$ can be proven similarly. Considering the spline functions $\HhatPhiT{S}{1}{\ell}$, $S \in \{L,R \}$, we get, 
on the one hand, by using relations~\eqref{eq:relation_basic} and~\eqref{eq:refining_relation} and the fact that $\Theta^{\ell+1}_{02}=0$
\begin{eqnarray}
\HhatPhiT{S}{1}{\ell}(\xi_1,\xi_2) & =  \left[\begin{array}{ccc} 0 & \overline{B}\mbox{}^{(S),\ell} & 0
\end{array}\right]
\left[\begin{array}{ccc} \HVTN{0}{\ell}(\xi_1,\xi_2) & \HVTN{1}{\ell}(\xi_1,\xi_2) & \HVTN{2}{\ell}(\xi_1,\xi_2) \end{array}\right]^T \nonumber \\
& = \left[\begin{array}{ccc} 0 & \overline{B}\mbox{}^{(S),\ell}& 0
\end{array}\right]
\left[\begin{array}{ccc} \Theta^{\ell+1}_{00}& \Theta^{\ell+1}_{01}& 0\\ 0 & \Theta^{\ell+1}_{11} & \Theta^{\ell+1}_{12} \\ 
0 & 0 & \Theta^{\ell+1}_{22} \end{array}\right]
\left[\begin{array}{c} \VTN{0}^{\ell+1}(\xi_1,\xi_2) \\ \VTN{1}^{\ell+1}(\xi_1,\xi_2)\\ \VTN{2}^{\ell+1}(\xi_1,\xi_2) \end{array}\right] \nonumber \\
& = \overline{B}\mbox{}^{(S),\ell} \Theta_{11}^{\ell +1} \VTN{1}^{\ell+1}(\xi_1,\xi_2) + \overline{B}\mbox{}^{(S),\ell} \Theta_{12}^{\ell +1} \VTN{2}^{\ell+1}(\xi_1,\xi_2). 
\label{eq:phi1term1}
\end{eqnarray}
On the other hand, the functions~$\HhatPhiT{S}{1}{\ell}$ can be expressed as
\begin{equation*}
 \HhatPhiT{S}{1}{\ell}(\xi_1,\xi_2)  = \alpha^{(S)}(\xi_2) \, \left[\HUN{p-1}{r}{i}{\ell}(\xi_2)\right]_{i \in \I_1^{\ell}} \HUN{p}{r}{1}{\ell}(\xi_1),
\end{equation*}
and after refining the B-spline functions~$\HUN{p}{r}{1}{\ell}(\xi_1)$ and $\HUN{p-1}{r}{i}{\ell}(\xi_2)$, $i \in \I_1^{\ell}$ we obtain that this is equal to
\[
 \HhatPhiT{S}{1}{\ell}(\xi_1,\xi_2) =  \alpha^{(S)}(\xi_2) \, \Lambda_{p-1}^{r,\ell+1} \left[\HUN{p-1}{r}{i}{\ell+1}(\xi_2)\right]_{i \in \I_1^{\ell +1}} 
 \sum_{j \in \I^{\ell+1}} \lambda_{1,j}^{\ell+1}\HUN{p}{r}{j}{\ell+1}(\xi_1),
\]
where $\lambda^{\ell+1}_{i,j}$ are again the entries of the refinement matrix~$\Lambda_{p}^{r,\ell+1}$. Recalling that $\lambda^{\ell+1}_{1,0}=0$ and $\lambda^{\ell+1}_{1,1}=
\frac{1}{2}$, we get 
\begin{align}
 & \HhatPhiT{S}{1}{\ell}(\xi_1,\xi_2) =  \alpha^{(S)}(\xi_2) \, \Lambda_{p-1}^{r,\ell+1} \left[\HUN{p-1}{r}{i}{\ell+1}(\xi_2)\right]_{i \in \I_1^{\ell +1}} 
\Big(\frac{1}{2} \HUN{p}{r}{1}{\ell+1}(\xi_1) +  \sum_{j \in \I^{\ell+1} \setminus \{0,1 \}} \lambda_{1,j}^{\ell+1}\HUN{p}{r}{j}{\ell+1}(\xi_1) \Big) \nonumber \\
& = \frac{1}{2} \Lambda_{p-1}^{r,\ell+1} \HhatPhiT{S}{1}{\ell +1}(\xi_1,\xi_2) + \alpha^{(S)}(\xi_2) \, \Lambda_{p-1}^{r,\ell+1} 
\left[\HUN{p-1}{r}{i}{\ell+1}(\xi_2)\right]_{i \in \I_1^{\ell +1}} 
 \sum_{j \in \I^{\ell+1} \setminus \{0,1 \}} \lambda_{1,j}^{\ell+1}\HUN{p}{r}{j}{\ell+1}(\xi_1). \label{eq:phi1term2}
\end{align}
Considering the two equal value terms~\eqref{eq:phi1term1} and \eqref{eq:phi1term2}, one can argue as for the case of the functions~$\HhatPhiT{S}{0}{\ell}$, that both first terms 
and both second terms each must coincide. This implies
\[
 \HhatPhiT{S}{1}{\ell}(\xi_1,\xi_2) = \frac{1}{2} \Lambda_{p-1}^{r,\ell+1} \HhatPhiT{S}{1}{\ell +1}(\xi_1,\xi_2) +
 \overline{B}\mbox{}^{(S),\ell} \Theta_{12}^{\ell +1} \VTN{2}^{\ell+1}(\xi_1,\xi_2),
\]
which finally shows the refinement relation for the functions~$\vec{\phi}^{\ell}_{\Gamma_1}$.

Finally, the relation for the functions $\vec{\phi}^{\ell}_{\Omega^{(S)}}$, $S \in \{L,R \}$, directly follows from relation~\eqref{eq:refining_relation}, 
since they correspond to ``standard'' B-splines.
\end{proof}

\subsection{Details about the implementation}

The implementation of GeoPDEs is based on two main structures: the mesh, that contains the information related to the computational geometry and the quadrature, and that did not need 
any change; and the space, with the necessary information to evaluate the basis functions and their derivatives. The new implementation was done in two steps: we first introduced 
the space of $C^1$ basis functions of one single level, as in Section~\ref{subsec:spaceW}, and then we added the hierarchical construction.

For the space of one level, we created a new space structure that contains the numbering for the basis functions of the three different types, namely $\PhiB_{\Omega^{(S)}}, 
\PhiB_{\Gamma_0}$ and $\PhiB_{\Gamma_1}$. The evaluation of the basis functions, and also matrix assembly, is performed using the representation of $C^1$ basis functions in terms 
of standard tensor-product B-splines, as in Section~\ref{subsec:representation}. Indeed, one can first assemble the matrix for tensor-product B-splines, and then multiply on each 
side this matrix by the same matrix given in \eqref{eq:relation_basic}, in the form
\[
K^{(S)}_{\UW} = B^{(S)} K^{(S)}_{\US{}{}} (B^{(S)})^{\top}, \quad \text{ with } B^{(S)} = \left[\begin{array}{ccc} \widehat{B}& \widetilde{B}^{(S)}& 0\\ 
0 & \overline{B}\mbox{}^{(S)} & 0 \\ 
0 & 0 & I_{n(n-2)} \end{array}\right], \text{ for } S = L,R,
\]
where $K^{(S)}_{\US{}{}}$ represents the stiffness matrix for the standard tensor-product B-spline space on the patch $\Omega^{(S)}$, and $K^{(S)}_{\UW}$ is the contribution to 
the stiffness matrix for the $\UW$ space from the same patch. Obviously, the same can be done at the element level, by restricting the matrices to suitable submatrices using the 
indices of non-vanishing functions on the element.

To implement the hierarchical $C^1$ splines we construct the same structures and algorithms detailed in \cite{garau2018}. First, it is necessary to complete the space structure of 
one single level, that we have just described, with some functionality to compute the support of a given basis function, as explained in \cite[Section~5.1]{garau2018}. Second, the 
hierarchical structures are constructed following the description in the same paper, except that for the evaluation of basis functions, and in particular for matrix assembly, 
we make use of the refinement masks of Section \ref{subsec:refinement}. The refinement masks essentially give us the two-level relation required by the algorithms in 
\cite{garau2018}, and in particular the matrix $C_\ell^{\ell+1}$ of that paper, that is used both during matrix assembly and to compute the refinement matrix after enlargement of 
the subdomains.

\section{Numerical examples} \label{sec:examples}

We present now some numerical examples to show the good performance of the hierarchical $C^1$ spaces for their use in combination with adaptive methods. We consider two different 
kinds of numerical examples: the first three tests are run for Poisson problems with an automatic adaptive scheme, while in the last numerical test we solve the bilaplacian problem, 
with a pre-defined refinement scheme.

\subsection{Poisson problem}
 The first three examples are tests on the Poisson equation
\[
\left \{
\begin{array}{rl}
- \Delta u = f & \text{ in } \Omega, \\
u = g & \text{ on } \partial \Omega. 
\end{array}
\right.
\]
The goal is to show that using the $C^1$ space basis does not spoil the properties of the local refinement. The employed isogeometric algorithm is based on the adaptive 
loop (see, e.g., \cite{BrBuGiVa19})
\begin{equation*}
\hbox{SOLVE} \longrightarrow \hbox{ESTIMATE} \longrightarrow \hbox{MARK} \longrightarrow \hbox{REFINE}.
\end{equation*}
In particular, for the examples we {\sl solve} the variational formulation of the problem imposing the Dirichlet boundary condition by Nitsche's method, and the problem is 
to find $u \in \UW_H$ such that
\[
\int_\Omega \nabla u \cdot \nabla v - \int_{\Gamma_D} \frac{du}{dn} v - \int_{\Gamma_D} u \frac{dv}{dn} + \int_{\Gamma_D} \frac{\gamma}{h} u v = 
\int_\Omega f v - \int_{\Gamma_D} g \frac{dv}{dn} + \int_{\Gamma_D} \frac{\gamma}{h} g v \quad \forall v \in \UW_H,
\]
where $h$ is the local element size, and the penalization parameter is chosen as $\gamma = 10(p+1)$, with $p$ the degree. The {\sl error estimate} is computed with a 
residual-based estimator, and the {\sl marking} of the elements at each iteration is done using D\"orfler's strategy 
(when not stated otherwise, we set the marking parameter equal to $0.75$). The {\sl refinement} step of the loop dyadically refines all the marked elements. Although 
optimal convergence can be only proved if we refine using a refinement strategy that guarantees that meshes are admissible \cite{buffa2016c}, previous numerical results 
show also a good behavior of non-admissible meshes \cite{BrBuGiVa19}.

For each of the three examples we report the results for degrees ${\bf p}=(3,3),(4,4)$, with 
$C^1$ smoothness across the interface, and  
with a regularity~$r$ equal to degree minus two within the single patches.
We compare the results for the adaptive scheme with those obtained by refining uniformly, and also with the ones obtained by employing the same adaptive scheme for 
hierarchical spaces with $C^0$ continuity across the interface, while the same regularity within the patches as above is kept.

\begin{ex} \label{ex:Example1}
For the first numerical example we consider the classical L-shaped domain $[-1,1]^2\setminus(0,1)\times(-1,0)$ defined by two patches as depicted in 
Figure~\ref{fig1b}, and the right-hand side $f$ and the boundary condition $g$ are chosen such that the exact solution is given by
\begin{equation*}
u(\rho, \theta)=\rho^{\frac{4}{3}} \, \sin\Bigg(\frac{4}{3}\theta\Bigg),
\end{equation*}
with $\rho$ and $\theta$ the polar coordinates. As it is well known, the exact solution has a singularity at the reentrant corner. 

\begin{figure}[htb!]
\begin{center}
\subfigure[Domain used in the Examples 1 and 4.]{\includegraphics[trim=20mm 0mm 20mm 0mm,clip,scale=0.20]{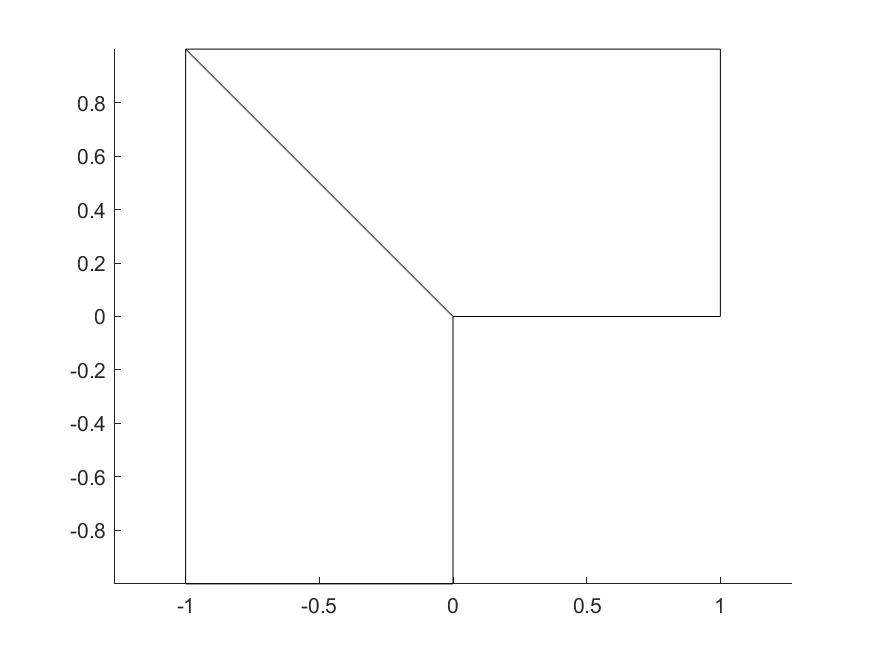} \label{fig1b}}
\subfigure[Domain used in the Examples 2 and 3.]{\includegraphics[trim=20mm 0mm 20mm 0mm,clip,scale=0.20]{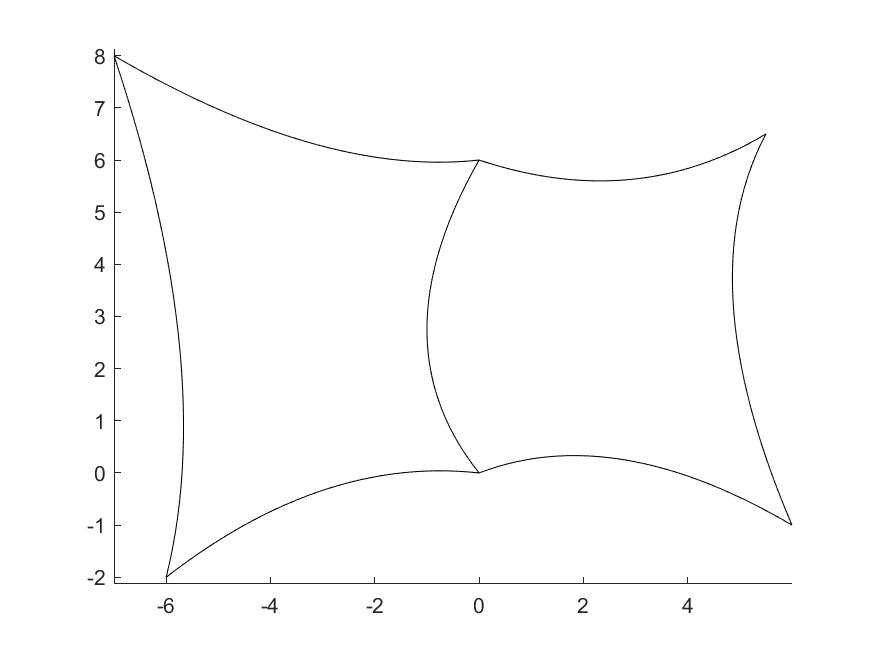} \label{fig1a}} 
\hspace{5mm} 
\end{center}
\caption{The two domains used in the numerical examples.} \label{fig1}
\end{figure}

We start the adaptive simulation with a coarse mesh of $4 \times 4$ elements on each patch, and we use D\"orfler's parameter equal to $0.90$ for the marking of the elements. 
The convergence results are presented in Figure~\ref{fig6}. It can be seen that the error in $H^1$ semi-norm and the estimator converge with the expected rate, in terms of 
the degrees of freedom, both for the $C^1$ and the $C^0$ discretization, and that this convergence rate is better than the one obtained with uniform refinement. Moreover, 
the error for the $C^1$ discretization is slightly lower than the one for the $C^0$ discretization, although they are very similar. This is in good agreement with what has 
been traditionally observed for isogeometric methods: the accuracy per degree of freedom is better for higher continuity. In this case, since the continuity only changes 
near the interface, the difference is very small.

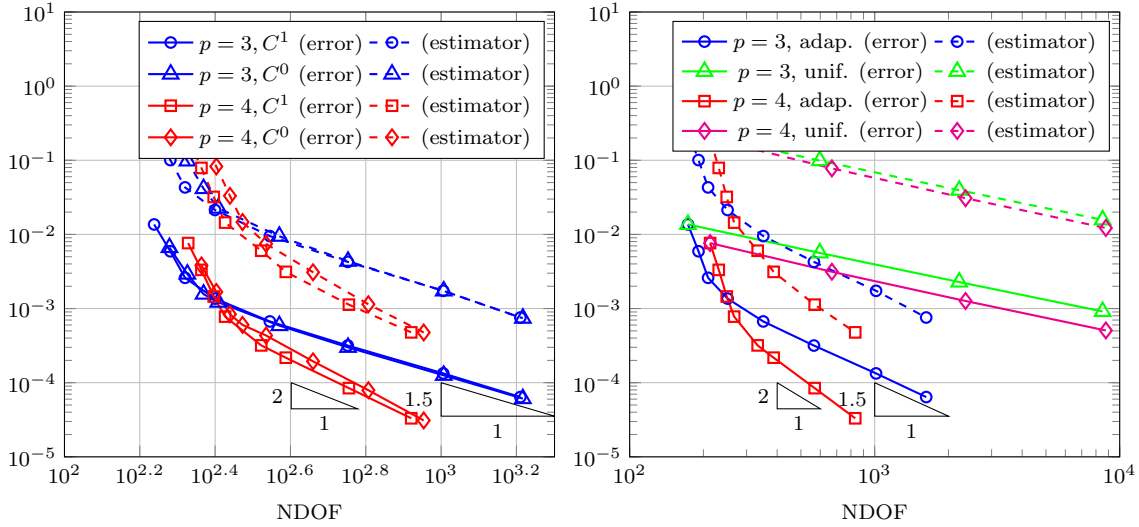
\begin{figure}[ht]
\centering
  \begin{tikzpicture}
  \begin{loglogaxis}[width=.5\textwidth,height=7.5cm,xlabel={NDOF},ylabel={},xmin=1e2,xmax=2e3,ymin=1e-5,ymax=1e1, grid = major, anchor = north, legend columns = 2]
  \addplot[color=blue, mark=o, solid, thick] table [x={ndof}, y={error}, col sep=comma] {ex1_p3_C1.csv};
  \addplot[color=blue, mark=o, dashed, thick, mark options = {solid}] table [x={ndof}, y={estimator}, col sep=comma] {ex1_p3_C1.csv};
  \addplot[color=blue, mark=triangle, solid, thick, mark options = {scale=1.7}] table [x={ndof}, y={error}, col sep=comma] {ex1_p3_C0.csv};
  \addplot[color=blue, mark=triangle, dashed, thick, mark options = {solid, scale=1.7}] table [x={ndof}, y={estimator}, col sep=comma] {ex1_p3_C0.csv};
  \addplot[color=red, mark=square, solid, thick, mark options = {scale=1.}] table [x={ndof}, y={error}, col sep=comma] {ex1_p4_C1.csv};
  \addplot[color=red, mark=square, dashed, thick, mark options = {solid, scale=1.}] table [x={ndof}, y={estimator}, col sep=comma] {ex1_p4_C1.csv};
  \addplot[color=red, mark=diamond, solid, thick, mark options = {scale=1.5}] table [x={ndof}, y={error}, col sep=comma] {ex1_p4_C0.csv};
  \addplot[color=red, mark=diamond, dashed, thick, mark options = {solid, scale=1.5}] table [x={ndof}, y={estimator}, col sep=comma] {ex1_p4_C0.csv};
  \scriptsize \legend{{$p=3, C^1$ (error)}, {(estimator)}, {$p=3, C^0$ (error)}, {(estimator)}, {$p=4, C^1$ (error)}, {(estimator)}, {$p=4, C^0$ (error)}, {(estimator)}}
\convergenceslope{4e2}{6e2}{1e-4}{2}{font=\scriptsize,above}{solid}
\convergenceslope{1e3}{2e3}{1e-4}{1.5}{font=\scriptsize,above}{solid}
  \end{loglogaxis}
\end{tikzpicture}
  \begin{tikzpicture}
  \begin{loglogaxis}[width=.5\textwidth,height=7.5cm,xlabel={NDOF},ylabel={},xmin=1e2,xmax=1e4,ymin=1e-5,ymax=1e1, grid = major, anchor = north, legend columns = 2]
  \addplot[color=blue, mark=o, solid, thick] table [x={ndof}, y={error}, col sep=comma] {ex1_p3_C1.csv};
  \addplot[color=blue, mark=o, dashed, thick, mark options = {solid}] table [x={ndof}, y={estimator}, col sep=comma] {ex1_p3_C1.csv};
  \addplot[color=green, mark=triangle, solid, thick, mark options = {scale=1.7}] table [x={ndof}, y={error}, col sep=comma] {ex1_p3_C1_glob.csv};
  \addplot[color=green, mark=triangle, dashed, thick, mark options = {solid, scale=1.7}] table [x={ndof}, y={estimator}, col sep=comma] {ex1_p3_C1_glob.csv};
  \addplot[color=red, mark=square, solid, thick, mark options = {scale=1.}] table [x={ndof}, y={error}, col sep=comma] {ex1_p4_C1.csv};
  \addplot[color=red, mark=square, dashed, thick, mark options = {solid, scale=1.}] table [x={ndof}, y={estimator}, col sep=comma] {ex1_p4_C1.csv};
  \addplot[color=magenta, mark=diamond, solid, thick, mark options = {scale=1.5}] table [x={ndof}, y={error}, col sep=comma] {ex1_p4_C1_glob.csv};
  \addplot[color=magenta, mark=diamond, dashed, thick, mark options = {solid, scale=1.5}] table [x={ndof}, y={estimator}, col sep=comma] {ex1_p4_C1_glob.csv};
  \scriptsize \legend{{$p=3$, adap. (error)}, {(estimator)}, {$p=3$, unif. (error)}, {(estimator)}, {$p=4$, adap. (error)}, {(estimator)}, {$p=4$, unif. (error)}, {(estimator)}}
\convergenceslope{4e2}{6e2}{1e-4}{2}{font=\scriptsize,above}{solid}
\convergenceslope{1e3}{2e3}{1e-4}{1.5}{font=\scriptsize,above}{solid}
  \end{loglogaxis}
\end{tikzpicture}
\caption{Error in $H^1$ semi-norm and estimator for Example~\ref{ex:Example1} with ${\bf p}=(3,3)$ and ${\bf p}=(4,4)$, compared with $C^0$ case (left) and with global refinement 
case (right).}
 \label{fig6}
\end{figure}

We also show in Figure~\ref{fig5} the final meshes obtained with the different discretizations. It is clear that the adaptive method correctly refines the mesh in the 
vicinity of the reentrant corner, where the singularity occurs, and the refinement gets more local with higher degree.

\begin{figure}[h!]\begin{center}
\subfigure[${\bf p}=(3,3)$, $C^0$ functions on the interface: NDOF=$1648$.]{\includegraphics[trim=40mm 0mm 40mm 0mm,clip,scale=0.25]{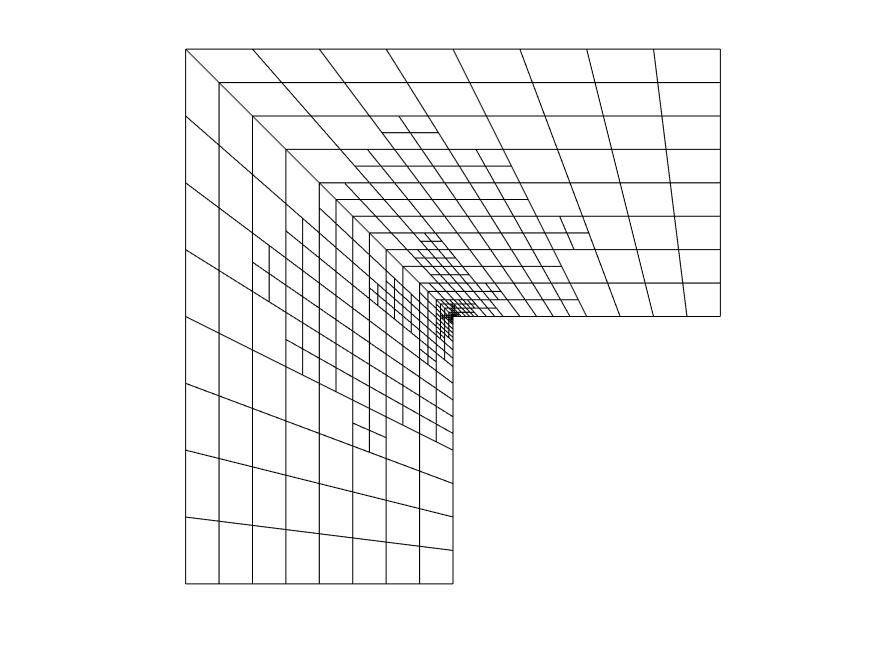}}\hspace{5mm}
\subfigure[${\bf p}=(3,3)$, $C^1$ functions on the interface: NDOF=$1623$.]{\includegraphics[trim=40mm 0mm 40mm 0mm,clip,scale=0.25]{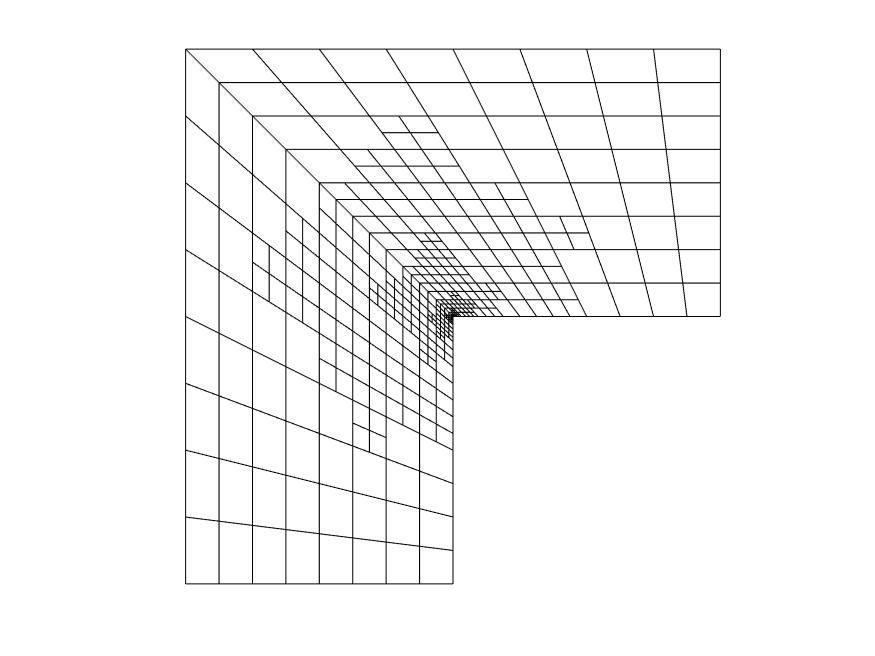}}\\
\subfigure[${\bf p}=(4,4)$, $C^0$ functions on the interface: NDOF=$833$.]{\includegraphics[trim=40mm 0mm 40mm 0mm,clip,scale=0.25]{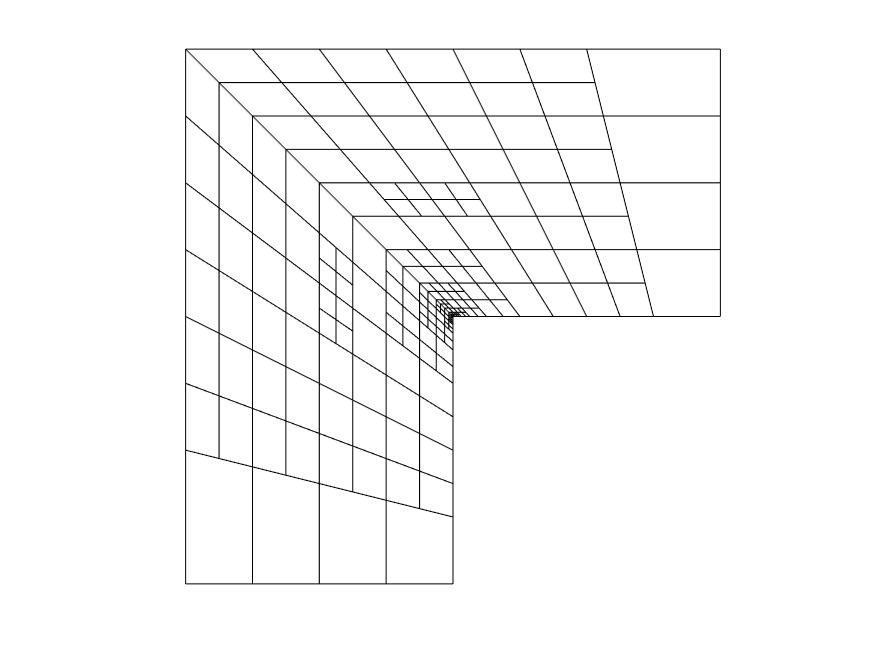}}\hspace{5mm}
\subfigure[${\bf p}=(4,4)$, $C^1$ functions on the interface: NDOF=$833$.]{\includegraphics[trim=40mm 0mm 40mm 0mm,clip,scale=0.25]{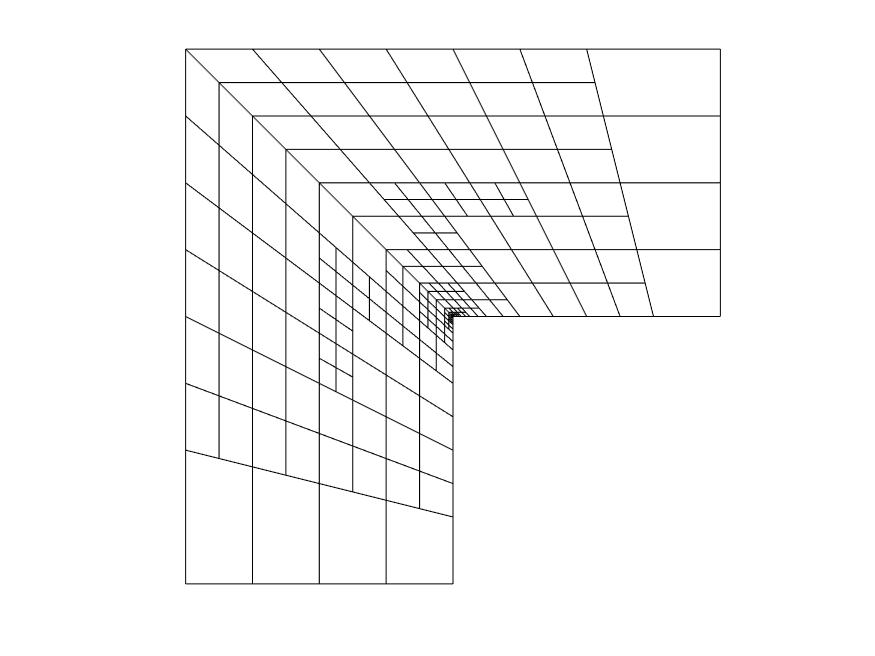}}
\caption{Hierarchical meshes for Example~\ref{ex:Example1}, with ${\bf p}=(3,3)$ and ${\bf p}=(4,4)$. Apparently the meshes are the same for the $C^0$ and $C^1$ case, but there are 
some differences in the finest levels.}
 \label{fig5}
\end{center}
\end{figure}
\end{ex}

\begin{ex} \label{ex:Example2}
In the second example the data of the problem are chosen in such a way that the exact solution is
\begin{equation*}
u(x,y)=(-120x + x^2 - 96y - 8xy + 16y^2)^{12/5} \cos (\pi y/20),
\end{equation*}
defined on the domain shown in Figure~\ref{fig1a}. The geometry of the domain is given by two bicubic B\'ezier patches, and the control points are chosen following the algorithm 
in \cite{KaSaTa17b}, in such a way that the geometry is given by an analysis-suitable $G^1$ parametrization, see~\ref{app:geometry} for details. Note that we have chosen 
the solution such that it has a singularity along the interface.
In this example we start the adaptive simulation with a coarse mesh of $8 \times 8$ elements on each patch. We present the convergence results in Figure~\ref{fig:conv_ex2}. 
As before, both the (relative) error and the estimator converge with optimal rate, and both for the $C^0$ and the $C^1$ discretizations, with slightly better result for the $C^1$ 
spaces. We note that, since the singularity occurs along a line, optimal order of convergence for higher degrees cannot be obtained without anisotropic refinement, as it was observed 
in the numerical examples in \cite[Section~4.6]{gantner-phd}.

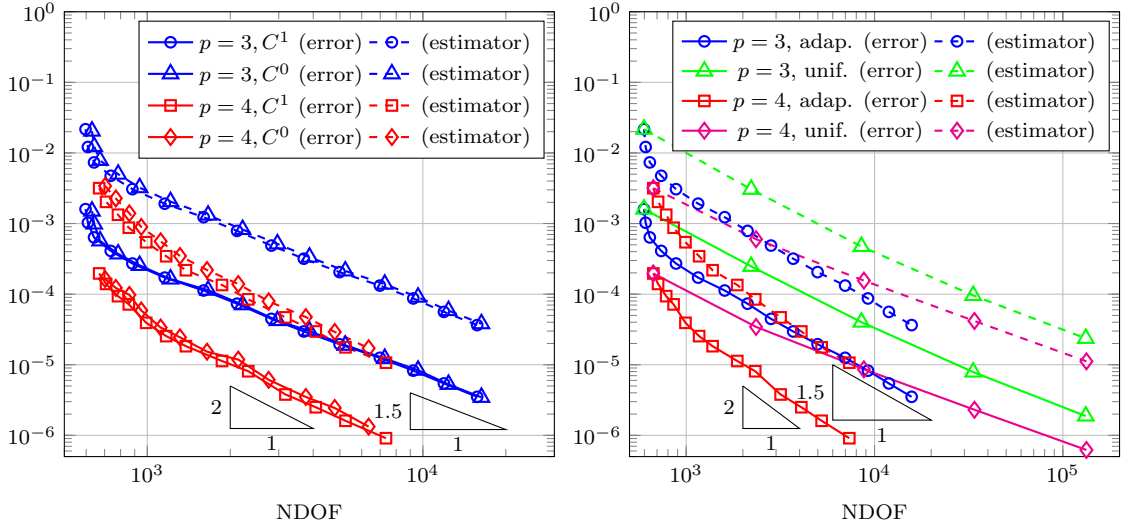
\begin{figure}[ht]
\centering
  \begin{tikzpicture}
  \begin{loglogaxis}[width=.5\textwidth,height=7.5cm,xlabel={NDOF},ylabel={},xmin=5e2,xmax=3e4,ymin=5e-7,ymax=1e0, grid = major, anchor = north, legend columns = 2]
  \addplot[color=blue, mark=o, solid, thick] table [x={ndof}, y={error}, col sep=comma] {ex2_p3_C1.csv};
  \addplot[color=blue, mark=o, dashed, thick, mark options = {solid}] table [x={ndof}, y={estimator}, col sep=comma] {ex2_p3_C1.csv};
  \addplot[color=blue, mark=triangle, solid, thick, mark options = {scale=1.7}] table [x={ndof}, y={error}, col sep=comma] {ex2_p3_C0.csv};
  \addplot[color=blue, mark=triangle, dashed, thick, mark options = {solid, scale=1.7}] table [x={ndof}, y={estimator}, col sep=comma] {ex2_p3_C0.csv};
  \addplot[color=red, mark=square, solid, thick, mark options = {scale=1.}] table [x={ndof}, y={error}, col sep=comma] {ex2_p4_C1.csv};
  \addplot[color=red, mark=square, dashed, thick, mark options = {solid, scale=1.}] table [x={ndof}, y={estimator}, col sep=comma] {ex2_p4_C1.csv};
  \addplot[color=red, mark=diamond, solid, thick, mark options = {scale=1.5}] table [x={ndof}, y={error}, col sep=comma] {ex2_p4_C0.csv};
  \addplot[color=red, mark=diamond, dashed, thick, mark options = {solid, scale=1.5}] table [x={ndof}, y={estimator}, col sep=comma] {ex2_p4_C0.csv};
 \scriptsize \legend{{$p=3, C^1$ (error)}, {(estimator)}, {$p=3, C^0$ (error)}, {(estimator)}, {$p=4, C^1$ (error)}, {(estimator)}, {$p=4, C^0$ (error)}, {(estimator)}}
\convergenceslope{2e3}{4e3}{5e-6}{2}{font=\scriptsize,above}{solid}
\convergenceslope{9e3}{2e4}{4e-6}{1.5}{font=\scriptsize,above}{solid}
  \end{loglogaxis}
\end{tikzpicture}
  \begin{tikzpicture}
  \begin{loglogaxis}[width=.5\textwidth,height=7.5cm,xlabel={NDOF},ylabel={},xmin=5e2,xmax=2e5,ymin=5e-7,ymax=1e0, grid = major, anchor = north, legend columns = 2]
  \addplot[color=blue, mark=o, solid, thick] table [x={ndof}, y={error}, col sep=comma] {ex2_p3_C1.csv};
  \addplot[color=blue, mark=o, dashed, thick, mark options = {solid}] table [x={ndof}, y={estimator}, col sep=comma] {ex2_p3_C1.csv};
  \addplot[color=green, mark=triangle, solid, thick, mark options = {scale=1.7}] table [x={ndof}, y={error}, col sep=comma] {ex2_p3_C1_glob.csv};
  \addplot[color=green, mark=triangle, dashed, thick, mark options = {solid, scale=1.7}] table [x={ndof}, y={estimator}, col sep=comma] {ex2_p3_C1_glob.csv};
  \addplot[color=red, mark=square, solid, thick, mark options = {scale=1.}] table [x={ndof}, y={error}, col sep=comma] {ex2_p4_C1.csv};
  \addplot[color=red, mark=square, dashed, thick, mark options = {solid, scale=1.}] table [x={ndof}, y={estimator}, col sep=comma] {ex2_p4_C1.csv};
  \addplot[color=magenta, mark=diamond, solid, thick, mark options = {scale=1.5}] table [x={ndof}, y={error}, col sep=comma] {ex2_p4_C1_glob.csv};
  \addplot[color=magenta, mark=diamond, dashed, thick, mark options = {solid, scale=1.5}] table [x={ndof}, y={estimator}, col sep=comma] {ex2_p4_C1_glob.csv};
  \scriptsize \legend{{$p=3$, adap. (error)}, {(estimator)}, {$p=3$, unif. (error)}, {(estimator)}, {$p=4$, adap. (error)}, {(estimator)}, {$p=4$, unif. (error)}, {(estimator)}}
\convergenceslope{2e3}{4e3}{5e-6}{2}{font=\scriptsize,above}{solid}
\convergenceslope{6e3}{2e4}{1e-5}{1.5}{font=\scriptsize,above}{solid}
  \end{loglogaxis}
\end{tikzpicture}
\caption{Relative error in $H^1$ semi-norm and corresponding estimator for Example~\ref{ex:Example2} with ${\bf p}=(3,3)$ and ${\bf p}=(4,4)$, compared with $C^0$ case (left) and 
with global refinement case (right).}
 \label{fig:conv_ex2}
\end{figure}

We also present in Figure~\ref{fig:meshes_ex2} the finest meshes obtained with the different discretizations, and it can be observed that the adaptive method correctly refines 
near the interface, where the singularity occurs.

\begin{figure}[htb]\begin{center}
\subfigure[${\bf p}=(3,3)$, $C^0$ functions on the interface: NDOF=$16310$]{\includegraphics[trim=35mm 0mm 30mm 0mm,clip,scale=0.22]{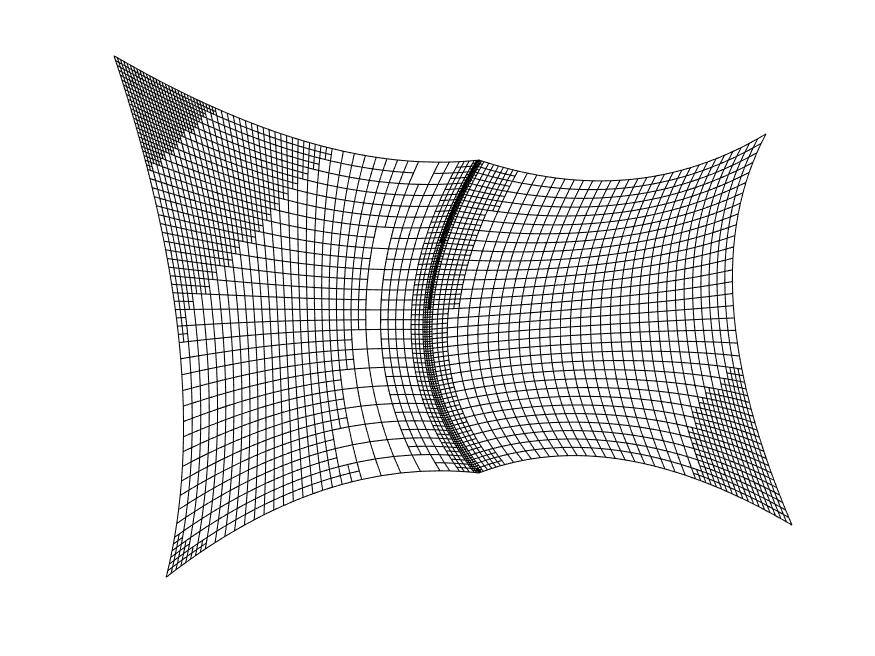}}\hspace{5mm}
\subfigure[${\bf p}=(3,3)$, $C^1$ functions on the interface: NDOF=$15741$]{\includegraphics[trim=35mm 0mm 30mm 0mm,clip,scale=0.22]{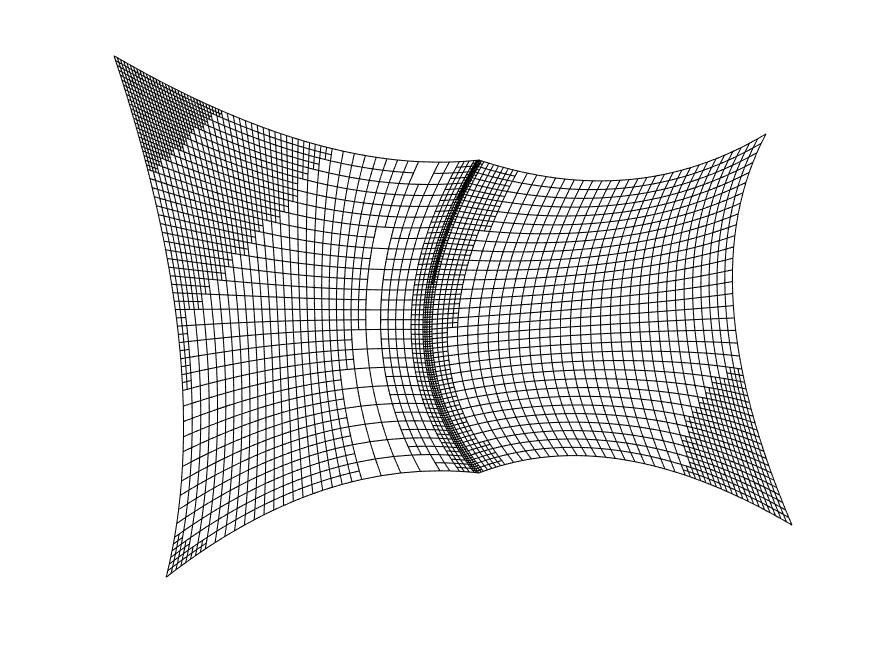}}\\
\subfigure[${\bf p}=(4,4)$, $C^0$ functions on the interface: NDOF=$6357$]{\includegraphics[trim=35mm 0mm 30mm 0mm,clip,scale=0.22]{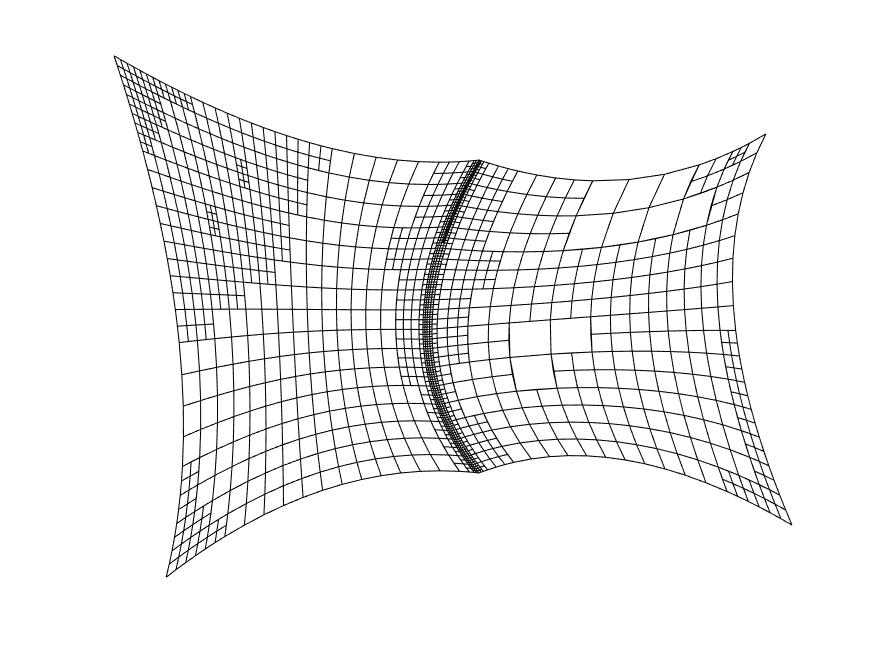}}\hspace{5mm}
\subfigure[${\bf p}=(4,4)$, $C^1$ functions on the interface: NDOF=$7347$]{\includegraphics[trim=35mm 0mm 30mm 0mm,clip,scale=0.22]{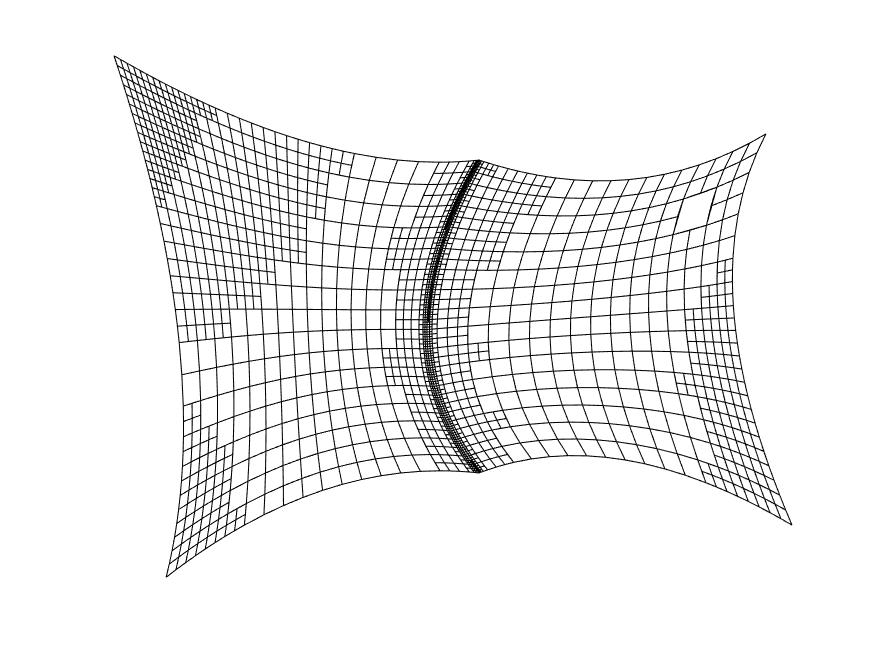}}
\caption{Hierarchical meshes for Example~\ref{ex:Example2}, with ${\bf p}=(3,3)$ and ${\bf p}=(4,4)$.}
 \label{fig:meshes_ex2}
\end{center}
\end{figure}
\end{ex}

\begin{ex} \label{ex:Example3}
We consider the same domain as in the previous example, and the right-hand side and the boundary condition are chosen in such a way that the exact 
solution is given by
\begin{equation*}
u(x,y)=(y-1.7)^{12/5} \cos(x/4).
\end{equation*}
In this case the solution has a singularity along the line $y=1.7$, that crosses the interface and is not aligned with the mesh.

The convergence results, that are presented in Figure~\ref{fig:conv_ex3}, are very similar to the ones of the previous example, and show optimal convergence rates for 
both the $C^1$ and the $C^0$ discretizations. As before, we also present in Figure~\ref{fig:mesh_ex3} the finest meshes obtained with the different discretizations. It is 
evident that the adaptive algorithm successfully refines along the singularity line.

\begin{figure}[ht]
\centering
  \begin{tikzpicture}
  \begin{loglogaxis}[width=.5\textwidth,height=7.5cm,xlabel={NDOF},ylabel={},xmin=5e2,xmax=1e4,ymin=5e-4,ymax=1e1, grid = major, anchor = north, legend columns = 2]
  \addplot[color=blue, mark=o, solid, thick] table [x={ndof}, y={error}, col sep=comma] {ex3_p3_C1.csv};
  \addplot[color=blue, mark=o, dashed, thick, mark options = {solid}] table [x={ndof}, y={estimator}, col sep=comma] {ex3_p3_C1.csv};
  \addplot[color=blue, mark=triangle, solid, thick, mark options = {scale=1.7}] table [x={ndof}, y={error}, col sep=comma] {ex3_p3_C0.csv};
  \addplot[color=blue, mark=triangle, dashed, thick, mark options = {solid, scale=1.7}] table [x={ndof}, y={estimator}, col sep=comma] {ex3_p3_C0.csv};
  \addplot[color=red, mark=square, solid, thick, mark options = {scale=1.}] table [x={ndof}, y={error}, col sep=comma] {ex3_p4_C1.csv};
  \addplot[color=red, mark=square, dashed, thick, mark options = {solid, scale=1.}] table [x={ndof}, y={estimator}, col sep=comma] {ex3_p4_C1.csv};
  \addplot[color=red, mark=diamond, solid, thick, mark options = {scale=1.5}] table [x={ndof}, y={error}, col sep=comma] {ex3_p4_C0.csv};
  \addplot[color=red, mark=diamond, dashed, thick, mark options = {solid, scale=1.5}] table [x={ndof}, y={estimator}, col sep=comma] {ex3_p4_C0.csv};
 \scriptsize \legend{{$p=3, C^1$ (error)}, {(estimator)}, {$p=3, C^0$ (error)}, {(estimator)}, {$p=4, C^1$ (error)}, {(estimator)}, {$p=4, C^0$ (error)}, {(estimator)}}
\convergenceslope{2e3}{3.5e3}{3e-3}{2}{font=\scriptsize,above}{solid}
\convergenceslope{6e3}{9e3}{1.8e-3}{1.5}{font=\scriptsize,above}{solid}
  \end{loglogaxis}
\end{tikzpicture}
  \begin{tikzpicture}
  \begin{loglogaxis}[width=.5\textwidth,height=7.5cm,xlabel={NDOF},ylabel={},xmin=5e2,xmax=2e5,ymin=2e-4,ymax=2e1, grid = major, anchor = north, legend columns = 2]
  \addplot[color=blue, mark=o, solid, thick] table [x={ndof}, y={error}, col sep=comma] {ex3_p3_C1.csv};
  \addplot[color=blue, mark=o, dashed, thick, mark options = {solid}] table [x={ndof}, y={estimator}, col sep=comma] {ex3_p3_C1.csv};
  \addplot[color=green, mark=triangle, solid, thick, mark options = {scale=1.7}] table [x={ndof}, y={error}, col sep=comma] {ex3_p3_C1_glob.csv};
  \addplot[color=green, mark=triangle, dashed, thick, mark options = {solid, scale=1.7}] table [x={ndof}, y={estimator}, col sep=comma] {ex3_p3_C1_glob.csv};
  \addplot[color=red, mark=square, solid, thick, mark options = {scale=1.}] table [x={ndof}, y={error}, col sep=comma] {ex3_p4_C1.csv};
  \addplot[color=red, mark=square, dashed, thick, mark options = {solid, scale=1.}] table [x={ndof}, y={estimator}, col sep=comma] {ex3_p4_C1.csv};
  \addplot[color=magenta, mark=diamond, solid, thick, mark options = {scale=1.5}] table [x={ndof}, y={error}, col sep=comma] {ex3_p4_C1_glob.csv};
  \addplot[color=magenta, mark=diamond, dashed, thick, mark options = {solid, scale=1.5}] table [x={ndof}, y={estimator}, col sep=comma] {ex3_p4_C1_glob.csv};
  \scriptsize \legend{{$p=3$, adap. (error)}, {(estimator)}, {$p=3$, unif. (error)}, {(estimator)}, {$p=4$, adap. (error)}, {(estimator)}, {$p=4$, unif. (error)}, {(estimator)}}
\convergenceslope{2e3}{3.5e3}{3e-3}{2}{font=\scriptsize,above}{solid}
\convergenceslope{6.2e3}{1e4}{1.8e-3}{1.5}{font=\scriptsize,above}{solid}
  \end{loglogaxis}
\end{tikzpicture}
\caption{Error in $H^1$ semi-norm and estimator for Example~\ref{ex:Example3} with ${\bf p}=(3,3)$ and ${\bf p}=(4,4)$, compared with $C^0$ case (left) and with global refinement 
case (right).} \label{fig:conv_ex3}
\end{figure}

\begin{figure}[htb!]\begin{center}
\subfigure[${\bf p}=(3,3)$, $C^0$ functions on the interface: NDOF=$8388$]{\includegraphics[trim=35mm 0mm 30mm 0mm,clip,scale=0.22]{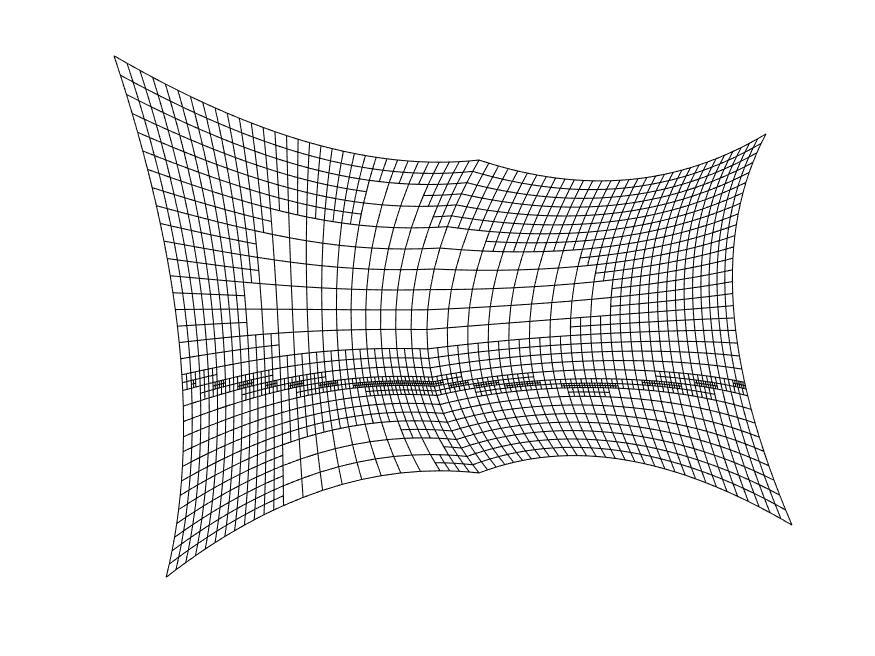}}\hspace{5mm}
\subfigure[${\bf p}=(3,3)$, $C^1$ functions on the interface: NDOF=$8336$]{\includegraphics[trim=35mm 0mm 30mm 0mm,clip,scale=0.22]{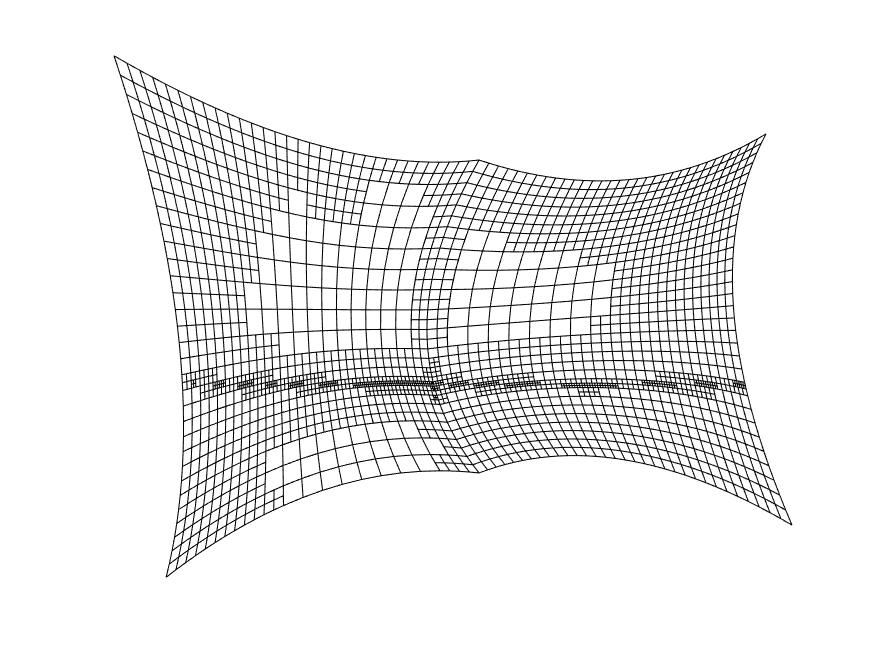}}\\
\subfigure[${\bf p}=(4,4)$, $C^0$ functions on the interface: NDOF=$6356$]{\includegraphics[trim=35mm 0mm 30mm 0mm,clip,scale=0.22]{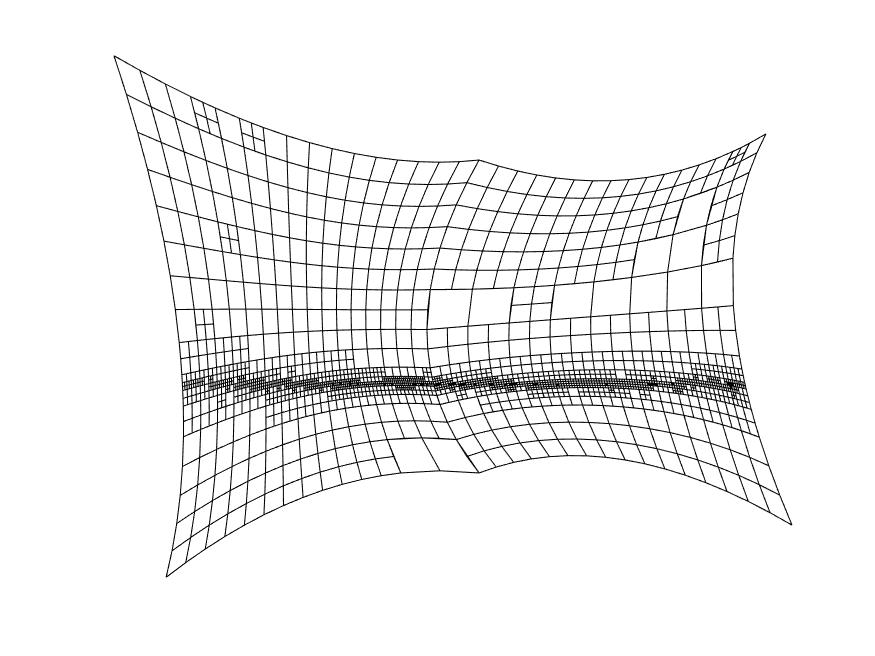}}\hspace{5mm}
\subfigure[${\bf p}=(4,4)$, $C^1$ functions on the interface: NDOF=$6601$]{\includegraphics[trim=35mm 0mm 30mm 0mm,clip,scale=0.22]{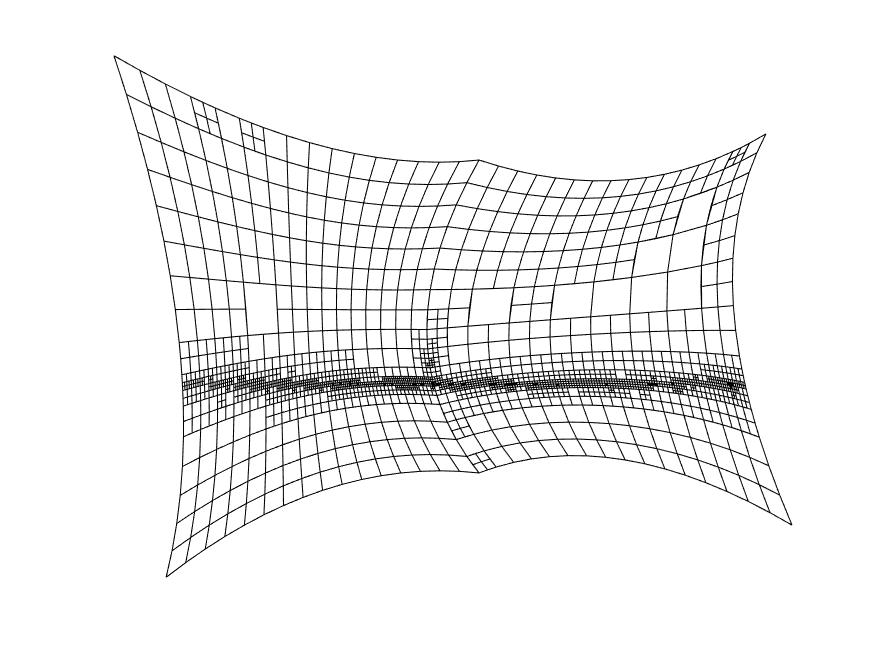}}
\caption{Hierarchical meshes for Example~\ref{ex:Example3}, with ${\bf p}=(3,3)$ and ${\bf p}=(4,4)$.}
 \label{fig:mesh_ex3}
\end{center}
\end{figure}
\end{ex}

\subsection{Bilaplacian problem}
In the last example we consider the solution of the bilaplacian problem, given in strong form by
\[
\left \{
\begin{array}{rl}
\Delta^2 u = f & \text{ in } \Omega, \\
u = g_1 & \text{ on } \partial \Omega, \\
\displaystyle \frac{\partial u}{\partial n} = g_2 & \text{ on } \partial \Omega.
\end{array}
\right.
\]
It is well known that the weak formulation of the problem in direct form requires the trial and test functions to be in $H^2(\Omega)$. For the discretization with a Galerkin method, 
this can be obtained if the discrete basis functions are $C^1$. The solution of the problem with $C^0$ basis functions, instead, requires to use a mixed variational formulation or 
some sort of weak enforcement of the $C^1$ continuity across the interface, like with a Nitsche's method.

\begin{ex} \label{ex:Example4}
For the last numerical test we solve the bilaplacian problem in the L-shaped domain as depicted in Figure~\ref{fig1b}. The right-hand side and the boundary 
conditions are chosen in such a way that the exact solution is given, in polar coordinates $(\rho, \theta)$, by 
\[
u(\rho,\theta)=\rho^{z+1}(C_1\,F_1(\theta) - C_2\,F_2(\theta)),
\]
where value in the exponent is chosen equal to $z= 0.544483736782464$, which is the smallest positive solution of 
\[
\sin (z \omega) + z \sin (\omega) = 0,
\]
with $\omega = 3\pi / 2$ for the L-shaped domain, see \cite[Section~3.4]{Grisvard}. The other terms are given by 
\begin{align*}
&C_1=\frac{1}{z-1} \sin\left(\frac{3(z-1)\pi}{2}\right) - 
\frac{1}{z-1}\sin\left(\frac{3(z+1)\pi}{2}\right),\\
&C_2=\cos\left(\frac{3(z-1)\pi}{2}\right) - \cos\left(\frac{3(z+1)\pi}{2}\right),\\
&F_1(\theta)=\cos((z-1)\theta) - \cos((z+1)\theta),\\
&F_2(\theta)=\frac{1}{z-1}\sin((z-1)\theta) - \frac{1}{z+1}\sin((z+1)\theta).
\end{align*}
The exact solution has a singularity at the reentrant corner, and it is the same kind of singularity that one would encounter for the Stokes problem.

For our numerical test we start with a coarse mesh of $8 \times 8$ elements on each patch. In this case, instead of refining the mesh with an adaptive algorithm we decided to refine 
following a pre-defined strategy: at each refinement step, a region surrounding the reentrant corner, and composed of $4 \times 4$ elements of the finest level, is marked for 
refinement, see Figure~\ref{fig:mesh_ex4}. We remark that the implementation of the adaptive algorithm with a residual-based estimator would require computing fourth order 
derivatives at 
the quadrature points, and several jump terms across the interface, that is beyond the scope of the present work.

In Figure~\ref{fig:conv_ex4} we show the error obtained in $H^2$ semi-norm when computing with $C^1$ hierarchical splines of degrees 3 and 4 and regularity~$r$ equal to 
degree minus two within the single patches, for the local refinement described above, and with $C^1$ isogeometric splines of the same degree and inner regularity~$r$
with global uniform refinement. It is obvious that the hierarchical spaces perform much better, as we obtain a lower error with many less degrees of freedom. 
In this case we do not see a big difference between the results obtained for degrees 3 and 4, but this is caused by the fact that we are refining by hand, and the asymptotic 
regime has not been reached yet. 

\begin{figure}[h!]\begin{center}
\subfigure[Refinement of the L-shaped domain]{\includegraphics[trim=5mm 0mm 5mm 0mm,clip,scale=0.30]{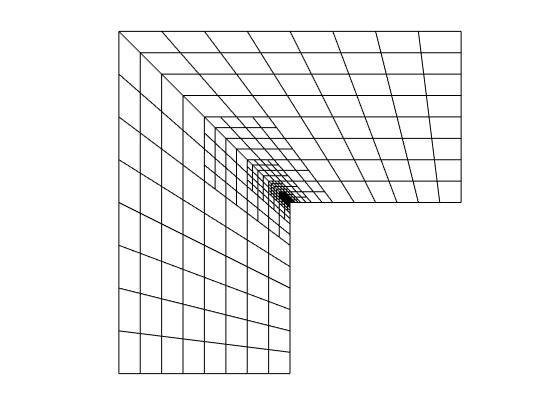} \label{fig:mesh_ex4}}
\subfigure[Error in $H^2$ semi-norm]{
  \begin{tikzpicture}
  \begin{loglogaxis}[width=.55\textwidth,height=5.4cm,xlabel={NDOF},ylabel={},xmin=5e2,xmax=4e4,ymin=2e-1,ymax=1e1, grid = major, anchor = north, legend columns = 2]
  \addplot[color=blue, mark=o, solid, thick] table [x={ndof}, y={error}, col sep=comma] {ex4_p3_C1.csv};
  \addplot[color=green, mark=triangle, solid, thick, mark options = {scale=1.7}] table [x={ndof}, y={error}, col sep=comma] {ex4_p3_C1_glob.csv};
  \addplot[color=red, mark=square, solid, thick, mark options = {scale=1.}] table [x={ndof}, y={error}, col sep=comma] {ex4_p4_C1.csv};
  \addplot[color=magenta, mark=diamond, solid, thick, mark options = {scale=1.5}] table [x={ndof}, y={error}, col sep=comma] {ex4_p4_C1_glob.csv};
 \footnotesize \legend{{$p=3$ local}, {$p=3$, unif.}, {$p=4$ local}, {$p=4$, unif.}}
\convergenceslope{2e3}{3.5e3}{3e-3}{2}{font=\footnotesize,above}{solid}
\convergenceslope{6e3}{9e3}{1.8e-3}{1.5}{font=\footnotesize,above}{solid}
  \end{loglogaxis}
\end{tikzpicture}
\label{fig:conv_ex4}
}
\caption{Hierarchical mesh (a) and comparison of the results obtained by local refinement and $C^1$ space with global refinement (b) on Example~\ref{ex:Example4}.}
 \label{fig:ex4}
\end{center}
\end{figure}
\end{ex}

\section{Conclusions} \label{sec:conclusions}

We presented the construction of $C^1$ hierarchical functions on two-patch geometries and their application in isogeometric analysis. After briefly reviewing the characterization 
of $C^1$ tensor-product isogeometric spaces, we investigated the properties needed to effectively use these spaces as background machinery for the hierarchical spline model. 
In particular, the local linear independence of the one-level basis functions and the nested nature of the considered $C^1$ splines spaces was proved. We also introduced an explicit 
expression of the refinement masks under dyadic refinement, that among other things is useful for the practical implementation of the hierarchical basis functions.
The numerical examples 
show that optimal convergence rates are obtained by the local refinement scheme for second and fourth order problems, even in presence of singular solutions. In future work we plan 
to generalize the construction to the multi-patch domain setting of \cite{KaSaTa17c}, but this will require a different strategy with respect to the approach presented in this work 
since the basis functions of a single level may be locally linearly dependent.

\paragraph*{\bf Acknowledgment}
Cesare Bracco, Carlotta Giannelli and Rafael V\'azquez are members of the INdAM Research group
GNCS. The INdAM support through GNCS and Finanziamenti Premiali SUNRISE is gratefully acknowledged. Rafael V\'azquez has been partially supported by the ERC Advanced Grant “CHANGE”, 
grant number 694515, 2016-2020

\appendix
\normalsize

\section{Geometry of the curved domain} \label{app:geometry}
The geometry in~Fig.\ref{fig1}(a) for the examples in Section~\ref{sec:examples} is generated by following the algorithm in~\cite{KaSaTa17b}. This technique is 
based on solving a quadratic minimization problem with linear side constraints, and constructs from an initial multi-patch geometry~$\widetilde{\f{F}}$ an 
analysis-suitable $G^1$ multi-patch parameterization~$\f{F}$ possessing the same boundary, vertices and first derivatives at the vertices as~$\widetilde{\f{F}}$. 

In our case, the initial geometry~$\widetilde{\f{F}}$ is given by the two patch parameterization consisting of two quadratic B\'ezier patches~$\widetilde{\f{F}}^{(L)}$ and 
$\widetilde{\f{F}}^{(R)}$ (i.e. without any internal knots) with the control points~$\widetilde{\f{c}}_{i,j}^{(S)}$, $S \in \{L,R \}$, specified in Table~\ref{tab:initial-geometry}. 
This parameterization is not analysis-suitable $G^1$.
\begin{table}[ht]
\centering
\setlength{\tabcolsep}{0.5em}
\begin{tabular}{|ccc||ccc|}
\hline
\multicolumn{3}{|c||}{$\widetilde{\f{c}}_{i,j}^{(L)}$} & \multicolumn{3}{c|}{$\widetilde{\f{c}}_{i,j}^{(R)}$} \\
\hline
$(0, 0)$ &    $(-3,1/3)$      & $(-6,-2)$ & $(0,0)$    &  $(13/5, 1)$  & $(6, -1)$ \\
$(-2, 5/2)$ & $(-13/4,53/20)$ & $(-5,2)$  & $(-2, 5/2)$ & $(39/20, 3)$ & $(4, 11/3)$ \\
$(0,6)$ &    $(-3,17/3)$     & $(-7,8)$  & $(0,6)$ &     $(3,5)$ &      $(11/2, 13/2)$ \\
\hline
\end{tabular}
\caption{Control points~$\widetilde{\f{c}}_{i,j}^{(S)}$, $S \in \{L,R\}$, of the initial non-analysis-suitable $G^1$ two-patch parameterization~$\widetilde{\f{F}}$.}
\label{tab:initial-geometry}
\end{table}

Applying the algorithm in~\cite{KaSaTa17b} (by using Mathematica), we construct an analysis-suitable $G^1$ two-patch geometry~$\f{F}$ with bicubic B\'ezier patches~$\f{F}^{(L)}$ and 
$\f{F}^{(R)}$. Their control points~$\f{c}_{i,j}^{(S)}$, $S \in \{L,R \}$, are given in Table~\ref{tab:ASG1}, where for presenting some of their coordinates the notations 
$D = 99170$ and
\[
\begin{array}{ll}
C_1 = 333939/D, & C_2 = 47387036/(22.5 D), \\ 
C_3 = -15800567/(5 D), & C_4 = 242128576/(67.5 D), \\
C_5 = 57452423/(45 D), & C_6 = 81952942/(22.5 D),
\end{array}
\]  
are used.
\begin{table}[ht]
\centering
\setlength{\tabcolsep}{0.5em}
\begin{tabular}{|cccc|}
\hline
\multicolumn{4}{|c|}{$\f{c}_{i,j}^{(L)}$} \\
\hline
$(0, 0)$ &    $(-2,2/9)$      & $(-4,-4/9)$ & $(-6, -2)$  \\
$(-4/3, 5/3)$ & $(-127/50, 44/25)$ & $(-98/25, 37/25)$ & $(-16/3,2/3)$ \\
$(-4/3,11/3)$ & $(C_3,C_4)$ & $(-89/25,189/50)$ & $(-17/3,4)$ \\
$(0,6)$ & $(-2,52/9)$ & $(-13/3,58/9)$ & $(-7,8)$ \\
\hline
\hline 
\multicolumn{4}{|c|}{$\f{c}_{i,j}^{(R)}$} \\ \hline
$(0, 0)$  & $(26/15, 2/3)$ & $(56/15, 1/3)$ & $(6,-1)$ \\
$(-4/3, 5/3)$ & $(C_1,C_2)$ & $(87/25,113/50)$ & $(14/3,19/9)$\\
$(-4/3,11/3)$ & $(C_5,C_6)$ & $(29/10, 4)$ & $(9/2,83/18)$ \\
$(0,6)$ & $(2,16/3)$ & $(23/6,11/2)$ & $(11/2,13/2)$ \\ 
\hline
\end{tabular}
\caption{Control points~$\f{c}_{i,j}^{(S)}$, $S \in \{ L,R\}$, of the resulting analysis-suitable $G^1$ two-patch parameterization~$\f{F}$.}\label{tab:ASG1}
\end{table}

\newpage
\section*{List of symbols}
\footnotesize{
\begin{flushleft}
\begin{tabularx}{\textwidth}{lXlX}
{\normalsize\textbf{Spline space}} &\\[.2cm]
$p$ & Spline degree, $p \geq 3$ &  \\
$r$ & Spline regularity, $1 \leq r \leq p-2$ &  \\
$\UXI{p}{r}$  & Open knot vector & \\
$\tau_i$ & internal breakpoints of knot vector $\UXI{p}{r}$&\\
$\UT$ & Ordered set of internal breakpoints~$\tau_{i}$& \\
$k$ & Number of different internal breakpoints of knot vector $\UXI{p}{r}$& \\ 
$\US{p}{r}$ & Univariate spline space of degree~$p$ and regularity~$r$ on $[0,1]$ over knot vector~$\UXI{p}{r}$ & \\
$\US{p}{r+1}$, $\US{p-1}{r}$  &  Univariate spline spaces of higher regularity and lower degree, respectively, defined from same internal breakpoints as $\US{p}{r}$& \\
$\UN{p}{r}{i}$, $\UN{p}{r+1}{i}$, $\UN{p-1}{r}{i}$ & B-splines of spline spaces~$\US{p}{r}$, $\US{p}{r+1}$ and $\US{p-1}{r}$, respectively  \\
$n$, $n_0$, $n_1$ & Dimensions of spline spaces~$\US{p}{r}$, $\US{p}{r+1}$ and $\US{p-1}{r}$, respectively& \\
$\I$, $\I_0$, $\I_1$ & Index sets of B-splines $\UN{p}{r}{i}$, $\UN{p}{r+1}{i}$ and $\UN{p-1}{r}{i}$, respectively  & \\
$\J{0}{i}$, $\J{1}{i}$ & Index subsets of $\I$ related to B-splines~$\UN{p}{r+1}{i}$ and $\UN{p-1}{r}{i}$, for $i \in \I_0$ and $i \in \I_1$, respectively& \\
$\zeta_m$ & Greville abscissae of spline space~$\US{p}{r}$, $m \in \I$& \\
$\VTN{0}$, $\VTN{1}$, $\VTN{2}$ & Vectors of tensor-product B-splines~$\UN{p}{r}{i}\UN{p}{r}{j}$ &\\[.4cm]
{\normalsize\textbf{Geometry}} &\\[.2cm]
$(S)$ & Upper index referring to specific patch, $S \in \{L,R \}$ & \\
$\Omega^{(S)}$ & Quadrilateral patch & \\
$\Omega$ & Two-patch domain $\Omega = \Omega^{(L)} \cup \Omega^{(R)}$ & \\
$\Gamma$ & Common interface of two-patch domain~$\Omega$&\\
$\f{F}^{(S)}$ & Geometry mapping of patch~$\Omega^{(S)}$ & \\
$\f{F}$ & Two patch geometry $\f{F}=(\f{F}^{(L)},\f{F}^{(R)})$ & \\
$\f{F}_{0}$ & Parameterization of interface~$\Gamma$ \\
$\f{d}$ & Specific transversal vector to $\Gamma$  \\
$\xi_1$, $\xi_2$ & Parameter directions of geometry mappings & \\
$\f{c}_{i,j}^{(S)}$ & Spline control points of geometry mapping~$\f{F}^{(S)}$ & \\
$\alpha^{(S)}$, $\beta^{(S)}$, $\beta$ & Gluing functions of two-patch geometry~$\f{F}$ & \\
$\gamma$ & Scalar function, $\gamma \neq 0$\\[.4cm]
{\normalsize\textbf{$C^1$ isogeometric space}} &\\[.2cm]
$\UV$& Space of $C^1$ isogeometric spline functions on $\Omega$ & \\
$\UW$ & Subspace of $\UV$ & \\
$\PhiB$ & Basis of $\UW$& \\
$\PhiB_{\Omega^{(S)}}$, $\PhiB_{\Gamma_0}$, $\PhiB_{\Gamma_1}$ & Parts of basis~$\PhiB$, 
$\PhiB = \PhiB_{\Omega^{(L)}} \cup \PhiB_{\Omega^{(R)}} \cup \PhiB_{\Gamma_0} \cup \PhiB_{\Gamma_1}$&  \\
$\phiS{S}{i}{j}$ & Basis functions of $\PhiB_{\Omega^{(S)}}$, $i \in \I \setminus \{ 0,1 \}$, $j \in \I$& \\
$\phiT{0}{i}$ & Basis functions of $\PhiB_{\Gamma_0}$, $i \in \I_0$ & \\
$\phiT{1}{i}$ & Basis functions of $\PhiB_{\Gamma_1}$, $i \in \I_1$ & \\
$\hatPhiT{S}{0}$, $\hatPhiT{S}{1}$, $\hatPhiS{S}{S}$ & Vectors of spline functions $\phiT{0}{i} \circ \f{F}^{(S)}$, $\phiT{1}{i} \circ \f{F}^{(S)}$ and 
$\phiS{S}{i}{j}\circ \f{F}^{(S)}$, respectively & \\
$\widehat{B}$, $\widetilde{B}^{(S)}$, $\overline{B}\mbox{}^{(S)}$ &  Transformation matrices & \\
$\coefa{}{i,j}$, $\coefb{(S)}{i,j}$, $\coefc{(S)}{i,j}$ & Entries of matrices $\widehat{B}$, $\widetilde{B}^{(S)}$ and $\overline{B}^{(S)}$, respectively & \\
$B^{(S)}$ & Block matrix assembled by the matrices~$\widehat{B}$, $\widetilde{B}^{(S)}$, $\overline{B}\mbox{}^{(S)}$ and the identity matrix $I_{n(n-2)}$ & \\
& & \\[.4cm]
\end{tabularx}
\begin{tabularx}{\textwidth}{lXlX}
{\normalsize\textbf{Hierarchical space}} &\\[.2cm]
$\ell$& Upper index referring to specific level & \\
$\Lambda_{p}^{r,\ell+1}$, $\Lambda_{p}^{r+1,\ell+1}$, $\Lambda_{p-1}^{r,\ell+1}$ & Refinement matrices for B-splines $\HUN{p}{r}{i}{\ell}$, $\HUN{p}{r+1}{i}{\ell}$ and 
$\HUN{p-1}{r}{i}{\ell}$, respectively& \\
$\lambda_{i,j}^{\ell+1}$& Entries of refinement matrix $\Lambda_{p}^{r,\ell+1}$ & \\
$\Theta_{ij}^{\ell+1}$ & Block matrices of refinement mask $\Lambda_{p}^{r,\ell+1} \otimes \Lambda_{p}^{r,\ell+1}$, $0 \leq i \leq j \leq 2$  & \\
$\UW_H$  & $C^1$ hierarchical isogeometric spline space &\\
${\cal W}$ & Basis of $\UW_H$ &  \\
& & \\[.4cm]
\end{tabularx}
\end{flushleft}
}
Most notations in the paragraphs ``Spline space'' and ``$C^1$ isogeometric space'' can be directly extended to the hierarchical setting by adding the upper 
index~$\ell$ to refer to the considered level.

\end{document}